\documentclass[a4paper,11pt]{article}

\usepackage[dvipdfmx]{graphicx}      
\usepackage{amsmath}                 
\usepackage{amssymb}                 
\usepackage{amsthm}                  
\usepackage{bm}                      
\usepackage{tikz}

\newtheorem{theorem}{Theorem}

\newtheorem{fact}{Fact}
\newtheorem{lem}{Lemma}
\newtheorem{prop}{Proposition}

\newtheorem{remark}{Remark}

\begin{document}

\title{Structural similarity between polyhedral embeddings and their duals and its application to self-duality of pathwidth} 
\author{Hikaru Yokoi\thanks{School of Fundamental Science and Technology, Graduate School of Science and Technology, Keio University, Yokohama, Japan. email : hikaru.yokoi2@gmail.com}\\ 
Keio University}
\maketitle

\begin{abstract}
Let $G$ be a graph embedded on a closed surface. 
We call $G$ a \emph{polyhedral embedding} if all facial walks are cycles, and any two of them are either disjoint or intersect in a single vertex or a single edge. 
In this paper, we present a new bound on the relation between the pathwidth of a polyhedral embedding and its dual. 
More precisely, we prove that for a polyhedral embedding $G$ on a closed surface with Euler characteristic $\chi$, $\mathsf{pw}(G^*) \leq 3\ \mathsf{pw}(G)+c$, where $c$ is a constant depending only on $\chi$. 
This result improves the coefficient of $\mathsf{pw}(G)$ in the previously known bound by Fomin and Thilikos (2007) and extends that of Amini, Huc, and P\'erennes (2009) for plane graphs. 
Furthermore, we obtain analogous bounds on the treewidth and pathwidth of the face subdivision of a polyhedral embedding. 
Our approach is based on a new quantitative estimate which demonstrates the structural similarity between a polyhedral embedding and its dual.
\\

\textbf{Keywords}: pathwidth, dual, embedding, decomposition, spanning tree, face subdivision
\end{abstract}

\section{Introduction}
In this paper, we consider only simple, undirected, and finite graphs. 
We follow the terminology of \cite{D2017}. 
A \emph{closed surface} is a compact, connected, and two-dimensional manifold without boundary. 
Let $G$ be a graph embedded in a closed surface. 
We denote the geometric dual of $G$ by $G^*$. 
A \emph{face subdivision} of $G$, denoted by $G^{\mathsf{fs}}$, is the graph obtained from $G$ by adding a vertex to each face and connecting it to all the vertices on the boundary.
We say that $G$ is a \emph{polyhedral embedding} if all facial walks are cycles, and any two of them are either disjoint or intersect in a single vertex or a single edge. 
(For another equivalent definition, see \cite{MT2001}.) 
A polyhedral embedding is a natural generalization of 3-connected plane graphs. 
The following fact on polyhedral embeddings is well-known.

\begin{fact} [\cite{MT2001}] \label{fact1}
Let $G$ be a polyhedral embedding in a closed surface. 
Then $G^*$ is also polyhedral. 
\end{fact}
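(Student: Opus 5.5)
We argue directly from the definition via the standard local picture of the embedding. Vertices of $G^*$ are faces of $G$, and edges of $G^*$ correspond bijectively to edges of $G$. We must verify three things for $G^*$. First, $G^*$ is simple. Second, every facial walk of $G^*$ is a cycle. Third, any two facial walks of $G^*$ meet in nothing, a single vertex, or a single edge. The faces of $G^*$ correspond to vertices of $G$. The facial walk of $G^*$ around the face $v^*$ is the cyclic sequence of faces of $G$ met when rotating around $v$, with consecutive faces joined by the dual of the edge between them in the rotation at $v$.

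\textbf{Simplicity of $G^*$.} A loop in $G^*$ would come from an edge $e$ of $G$ with the same face on both sides. The facial walk of that face would then traverse $e$ twice, so it would not be a cycle. A pair of parallel edges in $G^*$ would come from two distinct edges $e,e'$ of $G$ both lying on faces $F_1$ and $F_2$. Then $F_1\cap F_2$ contains two edges, contradicting polyhedrality.

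\textbf{Facial walks of $G^*$ are cycles.} Fix a vertex $v$ of $G$ with rotation $e_1,\dots,e_d$ and faces $F_1,\dots,F_d$, where $F_i$ lies between $e_i$ and $e_{i+1}$. The $F_i$ are genuinely faces with boundary cycles through $v$, and each $F_i$ uses exactly the two edges $e_i,e_{i+1}$ at $v$, since its facial walk is a cycle. We must show that the $F_i$ are pairwise distinct. Suppose $F_i=F_j$ with $i\ne j$. Then the facial cycle of $F_i$ passes through $v$ twice, or else uses four distinct edges at $v$ (or three when $j=i\pm1$). Either way it is not a cycle, which is a contradiction. Hence the facial walk of $v^*$ is a cycle of length $d\ge 3$. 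We also need $d\ge3$: if $d\le 2$, the two faces at $v$ would share both edges at $v$, contradicting polyhedrality. We also need $G$ to have no vertex of degree $1$, which is immediate because facial walks are cycles.

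\textbf{Intersections of facial cycles of $G^*$.} Take distinct vertices $u,v$ of $G$. The facial cycles of $u^*$ and $v^*$ share a vertex $F^*$ exactly when the face $F$ contains both $u$ and $v$. They share a dual edge $e^*$ exactly when $e$ is incident to both $u$ and $v$, that is, $e=uv$.

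Suppose first that $uv$ is an edge. Its two incident faces are shared by both cycles, and the claim is that no other face contains both $u$ and $v$. Suppose a third face $F$ contained both. $F$ and the face $F_1$ on one side of $uv$ are distinct faces both containing $u$ and $v$, and by polyhedrality their intersection is a single edge, which must be $uv$. Then $F$ also contains $uv$, and since $uv$ lies on only two faces, $F$ is one of them. So the intersection is exactly the edge $e^*$ with its two ends.

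Suppose next that $u,v$ are non-adjacent. If two distinct faces $F_1,F_2$ both contained $u$ and $v$, then $F_1\cap F_2\supseteq\{u,v\}$, which is not a single vertex. It is also not a single edge, since $uv\notin E(G)$. This contradicts polyhedrality. Hence the intersection is empty or a single vertex of $G^*$, as required.

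\textbf{Main obstacle.} The delicate step is the cycle claim, which needs care for vertices where the surface is locally nontrivial. It also relies on the fact that faces of a cellular embedding are determined by the rotation system. Polyhedral embeddings are cellular because facial walks are cycles bounding faces; if needed, this is taken as part of the standard definition in \cite{MT2001}.
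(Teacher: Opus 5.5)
Your proof is correct, but it does not follow a proof in the paper, because the paper gives none. Fact~\ref{fact1} is simply quoted from \cite{MT2001}, where it belongs to the general theory of polyhedral embeddings (notably their equivalent description as embeddings of 3-connected graphs with face-width at least $3$).

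You instead check the facial-walk definition for $G^*$ directly:
\begin{itemize}
\item $G^*$ is simple, since no facial cycle repeats an edge and no two faces share two edges.
\item The facial walk of $v^*$ is a cycle, since distinct corners at $v$ cannot lie on the same facial cycle.
\item The intersection condition holds, by splitting into $uv\in E(G)$ and $uv\notin E(G)$ and using that two distinct faces containing both $u$ and $v$ must meet exactly in the edge $uv$.
\end{itemize}
Your route is self-contained. It uses only the definition the paper actually adopts, plus the standard correspondence between corners at $v$ and the facial walk of $v^*$, and it works unchanged on non-orientable surfaces. The citation buys brevity, and it ties in with the 3-connectivity/face-width characterization that the paper needs anyway, since Theorems~\ref{thm7}--\ref{thm10} are stated for 3-connected graphs.

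One correction to your closing remark: cellularity does \emph{not} follow from ``facial walks are cycles''. A 3-connected planar graph drawn inside a disc on the torus has its outer face bounded by a cycle, yet that face is not a disc. So cellularity must be assumed as part of the definition, as in \cite{MT2001}, and as the paper implicitly does when it applies Euler's formula.

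A smaller point: in the distinctness step, the alternative ``uses four (or three) distinct edges at $v$'' is superfluous. $F_i=F_j$ with $i\neq j$ already forces the facial walk of that face to visit $v$ twice.
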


Diestel and K\"uhn \cite{DK2005} introduced an $H$-decomposition of graphs as a generalization of tree-decompositions. 
(Precisely, they call the following decomposition a \emph{connected} $H$-decomposition.) 
Let $G$ and $H$ be graphs. 
An \emph{$H$-decomposition} of $G$ is a pair $(H,\mathcal{B})$, where $\mathcal{B}=(G_h)_{h\in V(H)}$ is a family of subsets of $V(G)$ (called \emph{bags}) indexed by the vertices of $H$ satisfying the following conditions. 
\begin{itemize}
\item[(D1)] Every vertex of $G$ is contained in $G_h$ for some $h\in V(H)$. 
\item[(D2)] For every edge $gg'$ of $G$, there exists a vertex $h$ of $H$ such that $\{g, g'\}\subseteq G_h$, or an edge $hh'$ of $H$ such that $g\in G_h$ and $g'\in G_{h'}$.
\item[(D3)] For every vertex $g$ of $G$, the set $\{ h\in V(H) \mid g\in G_h \}$ induces a connected subgraph of $H$.
\end{itemize} 


We define the \emph{width of $(H,\mathcal{B})$} as the maximum order of $G_h$ among $h\in V(H)$. 
For a family of graphs $\mathcal{H}$, the \emph{$\mathcal{H} \textnormal{-} \mathsf{width}$} of $G$, denoted by $\mathcal{H} \textnormal{-} \mathsf{wd}(G)$, is the minimum width of all $H$-decompositions of $G$ with $H\in \mathcal{H}$. 
In particular, if $\mathcal{H}$ consists of a single graph $H$, we simply write $\mathcal{H} \textnormal{-} \mathsf{wd}(G)$ as $H \textnormal{-} \mathsf{wd}(G)$.

To state the standard definitions of a tree-decomposition and a path-decomposition of graphs, we repeat some parts of the definition of $H$-decompositions. 
Let $G$ and $H$ be graphs. 
An $H$-decomposition $(H,\mathcal{B})$ of $G$ is called a \emph{tree-decomposition} (resp. \emph{path-decomposition}) of $G$ if $H$ is a tree (resp. path) and $(H,\mathcal{B})$ satisfies (D1), (D3), and the following condition (D2'), which is stronger than (D2). 

\begin{itemize}
\item[(D2')] For every edge $gg'$ of $G$, there exists a vertex $h$ of $H$ such that $\{g, g'\}\subseteq G_h$,
\end{itemize}
where $\mathcal{B}=(G_h)_{h\in V(H)}$. 
The \emph{width} of a tree-decomposition (resp. path-decomposition) $(H,\mathcal{B})$ is $\max_{h\in V(H)} |G_h|- 1$. 
The \emph{treewidth} (resp. \emph{pathwidth}) of $G$, denoted by $\mathsf{tw}(G)$ (resp. $\mathsf{pw}(G)$), is the minimum width of all tree-decompositions (resp. path-decompositions) of $G$.

Given a graph $G$ embedded on a closed surface, how similar are $G$ and its dual $G^*$ in terms of width parameters? 
Research motivated by this question, so-called \emph{self-duality of width parameters}, originated from the seminal work \cite{RS1984} in the \emph{Graph Minors Project}. 
For treewidth and branchwidth, these self-duality problems were completely solved; the gaps between the widths of $G$ and $G^*$ are bounded by a constant depending only on Euler genus, for both these two parameters \cite{KLPSZ2024, M2012}. 

On the other hand, although the self-duality of pathwidth has been investigated for several classes of graphs \cite{AHP2009, BF2002, CHS2007, FT2007}, a tight bound has been obtained only for 2-connected outerplanar graphs \cite{CHS2007}. 
Note that the assumption of 2-connectivity is natural, since the pathwidth of trees is unbounded. 
In this paper, we focus on polyhedral embeddings. 
For this class of graphs, the following bounds are known. 

\begin{theorem} [\cite{FT2007}]\label{thm1}
Let $G$ be a polyhedral embedding on an orientable surface with genus $g$. 
Then, $\mathsf{pw}(G^*)\leq 6\ \mathsf{pw}(G)+6g-2$.
\end{theorem}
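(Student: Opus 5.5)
The plan is to pass from a path-decomposition of $G$ to one of $G^*$ through the radial (vertex--face incidence) graph $R$ of $G$, which is symmetric in $G$ and $G^*$. This is the strategy Fomin and Thilikos use.

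\textbf{Step 1: from $G$ to $R$.} Start from an optimal path-decomposition $(P,(G_h))$ of $G$ of width $k=\mathsf{pw}(G)$.

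Every face $f$ of $G$ is a cycle in the polyhedral case. Represent $f$ by the vertex set of its boundary, or by a suitable subset of it, chosen so that the bags meeting $f$ form a subpath. A face is a cycle, so any path-decomposition of $G$ covers it on a subpath interval $I_f$.

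For each $h$, let the new bag consist of:
\begin{itemize}
\item the vertices of $G_h$, and
\item the faces $f$ with $h\in I_f$, where $I_f$ is taken to be the minimal interval between the first and last bag meeting $f$.
\end{itemize}

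The key counting claim is that the number of faces "active" at $h$ is bounded by a linear function of $|G_h|$ plus a genus term. The reason is that a face active at $h$ but not meeting $G_h$ would have boundary vertices on both sides of the separator $G_h$. Its boundary cycle would then have to cross $G_h$, so it does meet $G_h$.

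It follows that each active face contains a vertex of $G_h$. The issue is that a vertex can lie on many faces, so I would refine this. The bound should come from an Euler-formula argument on the subgraph induced by $G_h$ together with the faces that "straddle" the separator.

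\textbf{Step 2: the counting bound.} The faces straddling the separator $G_h$ correspond to edges of a bipartite-like graph. This graph is embeddable on the surface, is formed between $G_h$ and the faces, and has each face incident to at least two separator vertices or to a separator edge. Euler's formula for that embedded graph then bounds the number of such faces by $2|G_h| + O(g)$.

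\textbf{Step 3: from $R$ to $G^*$.} Drop the vertices of $G$ from the bags. Each vertex $v$ of $G$ corresponds to a face of $G^*$. To ensure the dual edges are covered, for each edge $e=uv$ of $G$ the two faces incident to $e$ must appear together in some bag. This holds because both are active at any bag containing $u$ and $v$ together, which exists by (D2') for $G$.

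Connectivity (D3) for faces holds by the interval construction. The resulting width is at most about $2(k+1)$ active faces plus the vertex terms, accounting for the constant $6$ after the careful bookkeeping of Fomin and Thilikos. The additive $6g-2$ comes from the Euler characteristic correction.

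\textbf{Main obstacle.} I expect the main obstacle to be Step 2: bounding the number of faces intersecting a bag interval in terms of $|G_h|$ without losing more than the factor $6$ and the additive $6g$. I would first try charging each active face to an edge of a minor of $G$ embedded on the surface with vertex set $G_h$, and then apply $|E|\le 3|V|-6+6g$. This directly yields $6\,\mathsf{pw}(G)+6g-2$ after accounting for the $+1$ offsets in width.
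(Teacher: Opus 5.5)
The paper only quotes this result from Fomin and Thilikos and gives no proof of it, so your argument has to stand on its own. It does not, because the width bound fails.

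The family you build in Steps~1 and~3 is a valid path-decomposition of $G^*$: intervals give (D3), and any bag containing both ends of $uv$ contains both faces at $uv$. But your own separator argument shows that $f$ is active at $h$ \emph{exactly} when $f$ meets $G_h$. So every bag containing a vertex $v$ receives all $d_G(v)$ faces around $v$, and the width is at least $\Delta(G)-1$. No function of $\mathsf{pw}(G)$ bounds this. Concretely, take the wheel $W_n$, a $3$-connected plane graph with $\mathsf{pw}(W_n)=3$. Any bag containing the hub receives all $n$ triangular faces, so your decomposition has width at least $n-1$, whereas the bound to be proved is $16$.

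Step~2 does not repair the ``key counting claim.'' Your Euler count applies only to faces meeting $G_h$ in at least two vertices; that holds for faces with vertices strictly on both sides of $G_h$. The bag, however, also contains every face that touches $G_h$ in a single vertex, and those are exactly the faces that blow up. The phrases ``a suitable subset of it'' and ``I would refine this'' mark the missing idea. The constants at the end ($2(k+1)$, then $6$, then $6g-2$) are asserted rather than derived.

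What you need is a rule that keeps each face in only part of its span. Each vertex should be charged only boundedly many of its faces, while each face's set of bags stays an interval and two faces sharing an edge still meet in some bag. Once you shrink the intervals, the automatic covering of dual edges in Step~3 is lost. Already in $W_n$, keeping each triangle only in one bag containing its three vertices can put two triangles that share a spoke in different bags. This tension is the real difficulty, and a counting argument over full spans cannot resolve it.

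In the paper's own related results (Theorems~\ref{thm3} and~\ref{thm4}), the tension is resolved by an injective edge-assignment $\tau$:
\begin{itemize}
\item Each face $f$ is kept only at the vertices of $f$ that are not endpoints of $\tau(f)$.
\item This is still a $G$-decomposition of $G^*$ (Proposition~\ref{prop1}). Dual edges are covered within one bag or across an edge of $G$, so composing with a path-decomposition of $G$ puts both faces in a common bag.
\item The load at $v$ is $d_{H_\tau}(v)$ (Proposition~\ref{prop2}).
\item Choosing $\tau$ to avoid the edges of a spanning tree of maximum degree at most $3$ (or of small total excess) keeps every load at most $3$ outside a small apex set.
\end{itemize}
Be aware that this route gives $3\,\mathsf{pw}(G)+8g-5$ for orientable genus $g\ge 2$. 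That implies the stated bound only when $2g\le 3\,\mathsf{pw}(G)+3$, so the additive term $6g-2$ would still need a separate argument.
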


\begin{theorem} [\cite{AHP2009}]\label{thm2}
Let $G$ be a 3-connected plane graph. 
Then, $\mathsf{pw}(G^*)\leq 3\ \mathsf{pw}(G)+2$.
\end{theorem}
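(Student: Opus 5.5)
The statement to address is Theorem~\ref{thm2}: for a 3-connected plane graph $G$, $\mathsf{pw}(G^*)\leq 3\,\mathsf{pw}(G)+2$. My approach is to transfer a path-decomposition of $G$ to $G^*$ through a decomposition in the sense of Diestel and K\"uhn, with $H$ a path and condition (D2) replacing (D2'). The transfer uses one auxiliary structure: a map assigning each face $f$ of $G$ a connected set of vertices of $G$ so that adjacent faces get sets that touch or overlap.

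Fix an optimal path-decomposition $(P,(G_p)_{p\in V(P)})$ of $G$ of width $k=\mathsf{pw}(G)$. For each face $f$, let $\varphi(f)$ be the vertex set of the facial cycle $\partial f$. Define bags
\[
B_p=\{f\in V(G^*) : \varphi(f)\cap G_p\neq\emptyset\}.
\]
The properties are checked as follows.
\begin{itemize}
\item (D1) holds trivially.
\item (D3) holds because $\partial f$ is connected, so the bags of $P$ meeting $V(\partial f)$ form a subpath.
\item (D2') holds because adjacent faces share an edge $uv$, and any bag containing $u$ contains both faces.
\end{itemize}
So this is a path-decomposition of $G^*$, but its width is unbounded: a vertex of high degree lies on many faces. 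The real work is to shrink the sets $\varphi(f)$.

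\textbf{Refined assignment.} Choose, for each face $f$, a single representative object, either a vertex or an edge of $\partial f$, so that the following hold:
\begin{itemize}
\item[(a)] each vertex of $G$ is charged by at most a bounded number of faces, ideally $3$ up to an additive constant;
\item[(b)] for adjacent faces $f,f'$, the representatives are at distance at most one, i.e.\ connected through the shared edge.
\end{itemize}
Given such a choice, replace $\varphi(f)$ by a short connected piece of $\partial f$ through its representative. To guarantee (D2') we can also add, for each edge $e=uv$ of $G$, the two faces incident with $e$ to a bag containing $u$ and $v$. To keep the count at $3(k+1)$, charge each face to one vertex via an orientation. For a plane graph, Euler's formula ($|F|\le 2|V|-4$) gives an orientation-like assignment of faces to incident vertices with each vertex receiving at most $2$ faces, or at most $3$ with a slack allowing connectivity. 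Polyhedrality (3-connectivity) ensures that facial cycles meet properly, so the pieces remain connected and the assignment can be realized via a Schnyder-wood or canonical-ordering type structure.

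\textbf{Main obstacle.} The hard step is reconciling (D3) with the bounded charge. If face $f$ is charged to vertex $v$, then $f$ must appear in every bag between those containing $v$ and those containing the charged vertex of each neighboring face. This forces $f$ to be present in intermediate bags without being charged there. My first attempt would use a spanning tree $T$ of $G$ and its cotree $T^*$ in $G^*$, which is a spanning tree because the graph is planar. Each face is attached along $T^*$ edges, each of which corresponds to a non-tree edge of $G$. I would then charge each face to the endpoints of its parent edge in $T^*$ and show that every bag receives at most $3(k+1)$ faces, plus a constant. I expect the $+2$ to come from the root face and a boundary effect. Verifying the bag-size count along paths where faces must persist is the step I expect to require the most care.
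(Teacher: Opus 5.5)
\textbf{There is a genuine gap.} Your baseline decomposition via $\varphi(f)=V(\partial f)$ is correct, and so is the plan to route through a decomposition of $G^*$ indexed by $G$ that satisfies only (D2). But the proposal stops where the proof has to start. The ``refined assignment'' is only a wish list, and the concrete mechanisms you offer do not achieve it.

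Adding both faces of every edge $uv$ to a bag containing $u,v$ does not respect the $3(k+1)$ budget. A bag whose $k+1$ vertices span a triangulated piece can be charged for up to $3(k+1)-6$ edges, hence up to $6k-6$ faces. That is coefficient $6$, the Fomin--Thilikos regime, not $3$.

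Charging each face to the endpoints of its parent edge in the cotree $T^*$ fails on both counts.
\begin{itemize}
\item The load is unbounded. Every non-tree edge at $v$ is the parent edge of exactly one face, so $v$ is charged by exactly $d_G(v)-d_T(v)$ faces. In the double wheel over $C_n$ (two nonadjacent hubs $a,b$ joined to every rim vertex), every spanning tree has $d_T(a)+d_T(b)\le n+1$. So some hub has load at least $(n-1)/2$, while the pathwidth stays bounded.
\item Adjacency fails. Two faces sharing a tree edge get representatives that may lie far apart on their boundaries, so (D2) and (D3) fail. The ``persistence'' you worry about is then never controlled.
\end{itemize}
Nothing in the proposal uses a spanning tree of bounded degree, and without one there is no source for the factor $3$. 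Your guess that the $+2$ comes from a root face or boundary effect is also off.

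\textbf{The missing idea} is to use your cotree assignment the other way round.
\begin{enumerate}
\item Take a spanning tree $T_0$ of maximum degree $3$ (Barnette's theorem) and delete one edge $e_0$ to get $T$. Root the cotree of $T_0$ at a face $r$ incident with $e_0$. Set $\tau(f)$ to be the parent edge of $f$ for $f\neq r$, and $\tau(r)=e_0$. This is a bijection from $V(G^*)$ onto $E(G)\setminus E(T)$ with $\tau(f)$ on $\partial f$; Euler's formula gives $|E(G)\setminus E(T)|=|V(G^*)|$.
\item Put $f$ on every vertex of $\partial f$ \emph{except} the two ends of $\tau(f)$. That is, $G^*_v$ is the set of faces $f$ through $v$ such that $v$ is not an end of $\tau(f)$.
\item Bound the load. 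Since $G$ is polyhedral, $v$ lies on $d_G(v)$ distinct faces. It is dropped from exactly one face per edge of $E(G)\setminus E(T)$ at $v$. Hence $|G^*_v|=d_T(v)\le 3$.
\item Check (D3). Each face occupies a nonempty subpath of $\partial f$, since faces have length at least $3$.
\item Check (D2). A short case check using injectivity of $\tau$ shows that adjacent faces lie in $G^*_v$ and $G^*_w$ with $v=w$ or $vw\in E(G)$.
\item Compose with an optimal path-decomposition $(G_p)$ of $G$. The sets $\bigcup_{v\in G_p}G^*_v$ form a path-decomposition of $G^*$; (D2') holds because every edge $vw$ of $G$ lies in some $G_p$. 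Each bag has at most $3(k+1)$ faces, so $\mathsf{pw}(G^*)\le 3k+2$.
\end{enumerate}
The $+2$ is only the offset between bag size and width. This is exactly how the paper argues (Propositions~\ref{prop1}, \ref{prop2}, Lemma~\ref{lem3}, Theorem~\ref{thm7}, and Remark~\ref{rem1}).
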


Fomin and Thilikos \cite{FT2007} constructed an infinite family of 3-connected planar graphs which shows that the coefficient of $\mathsf{pw}(G)$ cannot be lowered below 1.5. 
Inspired by these results, we prove that the coefficient 3 in Theorem~\ref{thm2} can also be achieved for polyhedral embeddings on any closed surface and for general $\mathcal{H} \textnormal{-} \mathsf{width}$. 
Our proof method is a slight modification of that used in \cite{AHP2009}, yet provides a new perspective on the structural similarity between a polyhedral embedding and its dual (see Theorem~\ref{thm6} in Section~\ref{sec2}).

\begin{theorem}\label{thm3}
Let $G$ be a polyhedral embedding on a closed surface with Euler characteristic $\chi$. 
Let $H$ be a graph. 
Then, \begin{equation*} 
H \textnormal{-} \mathsf{wd}(G^*) \leq \begin{cases}
3\ H \textnormal{-} \mathsf{wd}(G) & \text{if}\ \chi\in \{1,2\}, \\
3\ H \textnormal{-} \mathsf{wd}(G)+2 & \text{if}\ \chi=0, \\
3\ H \textnormal{-} \mathsf{wd}(G)-4\chi + 1 & \text{if}\ \chi<0.
\end{cases}
\end{equation*}
\end{theorem}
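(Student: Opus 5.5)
The plan is to transfer an $H$-decomposition of $G$ to one of $G^*$ through a map $\varphi\colon V(G^*)\to V(G)$, sending each face $f$ to a vertex on its boundary. Given an $H$-decomposition $(H,(G_h))$ of $G$, each face $f$ is placed in the bags indexed by the vertices $h$ with $G_h\cap S_f\neq\emptyset$. Here $S_f$ is a small connected set of vertices on the boundary of $f$, chosen so that (D3) holds: since $S_f$ induces a connected subgraph of $G$, properties (D2) and (D3) for $G$ make $\{h: G_h\cap S_f\neq\emptyset\}$ connected in $H$. Condition (D2) for an edge $ff'$ of $G^*$ needs $S_f$ and $S_{f'}$ to meet or to be joined by an edge of $G$. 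The width of the new bag $G^*_h$ is at most $\sum_{v\in G_h}|\{f: v\in S_f\}|$, so the aim is to choose the sets $S_f$ so that each vertex lies in few of them on average, giving the bound $3|G_h|$ plus a correction.

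Following Amini, Huc and P\'erennes, I would take $S_f=\{v_f\}$, a single boundary vertex, and choose the assignment $f\mapsto v_f$ so that every vertex receives at most $3$ faces, with a bounded number of exceptions. Adjacency then needs care. Faces $f,f'$ share an edge $uw$, but $v_f$ and $v_{f'}$ need not be adjacent. To repair this, I would use a spanning-tree and cotree structure. Choose a spanning tree $T$ of $G$ and a spanning tree $T^*$ of $G^*$ inside the dual of the complement. By Euler's formula, the leftover edges number $2-\chi$. Orient $T^*$ from a root face, and for each face $f$ let $v_f$ be an endpoint of the dual edge joining $f$ to its parent. Each face therefore shares its assigned vertex with the edge to its parent. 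The polyhedral property guarantees that two faces meet in at most one edge or vertex, so consecutive faces meet properly. This should give the key estimate, the structural similarity of the paper's Theorem~6: there is a map $\varphi$ with $|\varphi^{-1}(v)|\le 3$ such that adjacent faces map to equal or adjacent vertices, apart from at most $O(2-\chi)$ exceptional edges.

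For adjacent faces with non-adjacent representatives, I would replace $v_f$ by the boundary edge shared with the parent, taking $S_f=\{u,w\}$ with $uw$ the shared edge. Then (D2) holds for tree edges of $T^*$, since the two sets share an endpoint or edge. Non-tree dual edges correspond to edges of $T$, and two faces meeting along an edge $uw$ have that shared edge available. The real counting problem is to make each vertex carry at most $3$ face-tokens. This is a discharging or orientation argument: the number of faces is $|E|-|V|+\chi$ and $|E|\le 3|V|-3\chi$, giving average load at most $3$. Hakimi's orientation theorem (every subgraph $X$ of the incidence structure has at most $3|X|$ faces, up to $\chi$) should give an assignment with load $\le 3$ everywhere except an additive $-\chi$-dependent excess. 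The constants $0$, $+2$ and $-4\chi+1$ would come from placing the surplus $O(2-\chi)$ faces and the exceptional non-tree edges into every bag.

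The main obstacle is making the assignment simultaneously load-balanced (at most $3$ per vertex) and adjacency-preserving for every edge of $G^*$. I would first try to prove a Hall-type lemma: for the bipartite vertex–face incidence graph of a polyhedral embedding, every set $F'$ of faces has at least $(|F'|-c_\chi)/3$ incident vertices. This follows from Euler's formula applied to the subcomplex. I would then handle adjacency by adding globally the few faces whose adjacencies fail.
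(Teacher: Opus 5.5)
There is a genuine gap. Your framework is right and is the paper's own. Putting $f$ into every bag that meets a connected set $S_f\subseteq V(f)$ amounts to composing the given $H$-decomposition of $G$ with the $G$-decomposition of $G^*$ whose bags are $\{f : v\in S_f\}$ (Lemma~\ref{lem1}), and your width estimate is Lemma~\ref{lem1}(b). The problem is your choice of the sets $S_f$, which are too small to satisfy (D2).

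\textbf{Single-vertex sets fail (D2) unboundedly often.} Take $S_f=\{v_f\}$. Faces of a polyhedral embedding are induced cycles that meet in at most one edge, so the face $f'$ across an edge of $f$ far from $v_f$ usually has no vertex in $N[v_f]$. For example, in the hexagonal prism with $v_f=a_1$ on the top face, the square $a_3a_4b_4b_3$ misses $\{a_1,a_2,a_6,b_1\}$. In a large patch of the hexagonal lattice this happens at every hexagon, whatever $v_f$ is. So the failing dual edges are not bounded in terms of $\chi$, and putting the bad faces into all bags does not give an additive constant.

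\textbf{Parent edges fail as well.} Take $S_f$ to be the edge shared with the parent in $T^*$. Then $S_f$ and $S_{p(f)}$ are two edges on the boundary of $p(f)$ that can be far apart, for instance opposite edges of a hexagon. So even tree edges of $T^*$ violate (D2). For a non-tree dual edge, the shared edge lies in neither set. The conflict you name in your last paragraph is real, and your proposed remedy has unbounded cost.

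\textbf{The missing idea is to make $S_f$ as large as possible.} Let $S_f$ be $V(f)$ minus the two endpoints of an edge $\tau(f)$ of $f$, with $\tau$ injective. Then:
\begin{itemize}
\item $S_f$ is a path, so (D3) holds.
\item (D2) follows from injectivity. Across a shared edge $vv'$, the edges $\tau(f)$ and $\tau(f')$ are not both $vv'$, so after renaming some endpoint $v$ lies in $S_f$. If $v\notin S_{f'}$, then $v$ is an endpoint of $\tau(f')$, and its other neighbor on $f'$ lies in $S_{f'}$.
\item The load of $v$ is $d_G(v)$ minus the number of assigned edges at $v$, which is $d_{H_\tau}(v)$ for $H_\tau=G-\tau(V(G^*))$.
\end{itemize}

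Your tree--cotree idea is essentially Lemma~\ref{lem2}, but the paper uses it to choose the \emph{excluded} edges $\tau(f)\in E(G)\setminus E(T)$, not the representatives. It does this via a unicyclic spanning subgraph of the face graph on the non-tree edges; for $\chi=2$, $T$ is a $3$-tree minus one edge. As a result, $H_\tau$ is $T$ plus at most $\max\{1-\chi,0\}$ edges.

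The bound $3$ then requires $T$ to have maximum degree $3$ (Theorems~\ref{thm7}--\ref{thm9}), or small total excess when $\chi<0$ (Theorem~\ref{thm10}). This gives $\mathsf{te}(H_\tau,3)\le \mathsf{te}(T,3)+2\max\{1-\chi,0\}$, and the excess vertices form the apex set $S$. Your Euler counting and Hakimi/Hall-type balancing only control single-vertex assignments, which fail (D2). The $3$-tree input is what your plan lacks.
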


\begin{theorem}\label{thm4}
Let $G$ be a polyhedral embedding on a closed surface with Euler characteristic $\chi$. 
Then, \begin{equation*} 
\mathsf{pw}(G^*) \leq \begin{cases}
3\ \mathsf{pw}(G)+2 & \text{if}\ \chi = 1, \\
3\ \mathsf{pw}(G)+4 & \text{if}\ \chi=0, \\
3\ \mathsf{pw}(G)-4\chi + 3 & \text{if}\ \chi<0.
\end{cases}
\end{equation*}
\end{theorem}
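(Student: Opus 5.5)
Theorem~\ref{thm4} should follow from Theorem~\ref{thm3} by specializing $H$ to a path and converting between $H$-decompositions and ordinary path-decompositions. Let $\mathcal{P}$ be the family of all paths. Since every path-decomposition is a $P$-decomposition, we get $\mathcal{P}\textnormal{-}\mathsf{wd}(G)\le \mathsf{pw}(G)+1$.

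For the converse direction, let $(P,\mathcal{B})$ be a $P$-decomposition of $G^*$ with $P=p_1p_2\cdots p_n$ and width $k$. Replace each bag by the union of two consecutive bags, setting $B'_i = G^*_{p_i}\cup G^*_{p_{i+1}}$. Conditions (D1) and (D3) are preserved, since each vertex's interval of indices is extended by at most one step. Condition (D2) now upgrades to (D2'): an edge split across $p_ip_{i+1}$ lies in $B'_i$. The new bags have order at most $2k$, which gives $\mathsf{pw}(G^*)\le 2\,\mathcal{P}\textnormal{-}\mathsf{wd}(G^*)-1$.

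Chaining these, Theorem~\ref{thm3} (applied to each path $H$, then minimizing) yields $\mathcal{P}\textnormal{-}\mathsf{wd}(G^*)\le 3(\mathsf{pw}(G)+1)+c_\chi$. But then we would get $\mathsf{pw}(G^*)\le 6\,\mathsf{pw}(G)+O(1)$, which loses the coefficient 3. So this naive conversion is too lossy, and I would instead avoid the merging loss on the $G^*$ side.

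The better route is to look inside the proof of Theorem~\ref{thm3}, or at Theorem~\ref{thm6}. Presumably each vertex of $G^*$ (a face $f$) is assigned to a connected subgraph of $G$ of size about 3 per vertex, and bags are formed by replacing each $g\in G_h$ with the faces charged to $g$. Starting from an honest path-decomposition of $G$ of width $\mathsf{pw}(G)$, all bags have size at most $\mathsf{pw}(G)+1$. Every edge $gg'$ of $G$ already lies in a single bag, so adjacent faces $f,f'$ of $G^*$ should land together in one bag, as long as the charging guarantees that adjacent faces are charged to adjacent or equal vertices of $G$. Bag sizes then become at most $3(\mathsf{pw}(G)+1)+c_\chi$, giving $\mathsf{pw}(G^*)\le 3\,\mathsf{pw}(G)+2+c_\chi$. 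This matches the stated constants: $c_1=0$ gives $+2$, $c_0=2$ gives $+4$, and $c_\chi=-4\chi+1$ gives $-4\chi+3$.

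One case needs care. If the charging only guarantees that adjacent faces are charged to adjacent vertices of $G$, then a tree-decomposition bag containing $g,g'$ handles the edge. If the charging is a "connected subgraph per face" model, adjacency in $G^*$ corresponds to touching subgraphs, which again meet in one bag by the Helly property for subtrees of a path. Either way (D2') holds for $G^*$.

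\textbf{Main obstacle.} The main obstacle is verifying that the structure in Theorem~\ref{thm6} gives this strong, within-one-bag adjacency property rather than only the weaker (D2)-type adjacency. I would try first to show it via the Helly property on intervals of the path.

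For $\chi=2$ the stated list omits the sphere, but Theorem~\ref{thm2} already covers that case.
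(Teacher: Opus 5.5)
Your final route is the paper's proof. You compose the $G$-decomposition of $G^*-S$ from Theorem~\ref{thm6} with an optimal path-decomposition of $G$, as in Lemma~\ref{lem1} and Remark~\ref{rem1}, and then add $S$ to every bag to get bags of size at most $3(\mathsf{pw}(G)+1)+|S|$.

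The ``main obstacle'' you flag is not actually an obstacle, and your own ``one case needs care'' paragraph already resolves it. Condition (D2) of the $G$-decomposition says exactly that adjacent faces lie in one bag or in bags indexed by adjacent vertices of $G$. Every edge of $G$ lies in a single bag of the path-decomposition of $G$, so (D2') for $G^*$ follows immediately, while (D1) and (D3) follow as in Lemma~\ref{lem1}(a).
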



In addition, we obtain the following bounds on the treewidth and pathwidth of the face subdivision of a polyhedral embedding.

\begin{theorem}\label{thm5}
Let $G$ be a polyhedral embedding on a closed surface with Euler characteristic $\chi$.
Then, \begin{equation*} 
\mathsf{p}(G^{\mathsf{fs}}) \leq \begin{cases}
4\ \mathsf{p}(G)+3 & \text{if}\ \chi\in \{1,2\}, \\
4\ \mathsf{p}(G)+5 & \text{if}\ \chi=0, \\
4\ \mathsf{p}(G)-4\chi + 4 & \text{if}\ \chi<0,
\end{cases}
\end{equation*}
where $\mathsf{p}(\cdot)$ denotes either the treewidth or the pathwidth of a graph.
\end{theorem}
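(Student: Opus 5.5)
We reduce Theorem~\ref{thm5} to a statement about $H$-decompositions in the spirit of Theorem~\ref{thm3}, and then convert back to tree- and path-decompositions. The key structural input will be the quantitative similarity result (Theorem~\ref{thm6} of Section~\ref{sec2}). We expect it to provide, for a polyhedral embedding $G$, a map $\phi$ assigning to each face $f$ of $G$ a vertex $\phi(f)\in V(G)$ incident with $f$, such that each vertex is the image of at most $3$ faces, apart from a bounded number of exceptional vertices (depending on $\chi$). It should also give connectivity control: the faces $\phi^{-1}(v)$ lie near $v$, so that adjacency in $G^*$ is realised along edges of $G$. This is what yields the $3$ in Theorem~\ref{thm3}, and it should underlie the $4$ here as well.

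\textbf{Step 1: lifting a decomposition.} Start from an optimal tree- (resp.\ path-) decomposition $(T,(B_t))$ of $G$ of width $\mathsf{p}(G)$. Define new bags by
\[
B'_t=B_t\cup\{f\in F(G): \phi(f)\in B_t\},
\]
where faces are identified with the added vertices of $G^{\mathsf{fs}}$. Each original vertex contributes itself plus at most $3$ face-vertices, so $|B'_t|\le 4|B_t|+c_\chi$. Hence the width is at most $4\,\mathsf{p}(G)+3+c_\chi$, and the constant $c_\chi$ ($0$, $2$, or $-4\chi+1$ in the three cases) should be exactly the one appearing in Theorem~\ref{thm3}.

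\textbf{Step 2: verifying (D1)--(D3) and (D2').} Condition (D1) is immediate. For (D3), the face-vertex $f$ lies in exactly the bags containing $\phi(f)$, a connected subtree, so (D3) holds. For an edge $vv'$ of $G$, (D2') is inherited from $(T,(B_t))$. The real work is an edge $f v$ of $G^{\mathsf{fs}}$ with $v$ on the boundary of $f$: we need some bag containing both $v$ and $\phi(f)$. Both vertices lie on the facial cycle of $f$, but they need not be adjacent. To force this, we enlarge the construction. For each face $f$ we use the subtree $T_{\phi(f)}$ together with the union of the subtrees $T_u$ along a path on the boundary of $f$ from $\phi(f)$ to $v$. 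Alternatively, and more cleanly, we apply the Theorem~\ref{thm3}-type construction with $H=T$, using the weaker condition (D2). That gives an $H$-decomposition in which $f$ and $v$ are in the same or adjacent bags. We then convert to a genuine decomposition by the standard trick: replace each bag $B'_t$ with $B'_t\cup B'_{t'}$ along edges, or for paths merge consecutive bags of the original vertices only. This trick is what costs the additional $+3$ rather than a doubling, because only the $G$-part of one neighbouring bag needs to be added, and that part is controlled by the original width.

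\textbf{Expected obstacle.} The main difficulty is Step 2's edge condition, i.e.\ ensuring every incidence $fv$ is covered without blowing the coefficient beyond $4$. We would first try to choose $\phi$ so that $\phi(f)$ is a vertex of $f$ whose subtree meets the subtree of every other vertex of $f$ (``Helly-type'' choice, possible if the subtrees of a facial cycle pairwise intersect along consecutive edges). Failing that, we fall back on the (D2) version plus subdividing each edge of $T$ with an intermediate bag $B_t\cup B_{t'}$ restricted appropriately. That subdivision preserves being a tree or path, which is why the result holds for both $\mathsf{tw}$ and $\mathsf{pw}$.
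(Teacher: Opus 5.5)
There is a genuine gap, and it sits exactly where you expect the obstacle: covering the incidences $fv$ of $G^{\mathsf{fs}}$.

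With a single vertex $\phi(f)$ per face, your bag $B'_t$ contains $f$ only where $\phi(f)$ lives. Facial cycles of a polyhedral embedding are induced: a chord $ab$ of $f$ would make $f$ share the vertices $a,b$, but not the edge $ab$, with a face containing $ab$. So every face of length at least $4$ has a boundary vertex $v$ that is neither $\phi(f)$ nor adjacent to it in $G$. Nothing forces $v$ and $\phi(f)$ into a common bag of an optimal decomposition of $G$. For the same reason, the bags $\{v\}\cup\phi^{-1}(v)$ do not even form a $G$-decomposition satisfying the weak condition (D2), so your ``Theorem~\ref{thm3}-type construction'' has nothing to compose.

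Your fallbacks do not repair this:
\begin{itemize}
\item Spreading $f$ along boundary paths from $\phi(f)$ to each $v$ points the right way. Without a global choice of these paths, however, the number of faces charged to a vertex can be as large as $d_G(v)$.
\item Nothing guarantees a Helly-type vertex, since only consecutive boundary vertices of a face are forced to share a bag.
\item Adding neighbouring bags, even only their $G$-part, adds up to $\mathsf{p}(G)+1$ vertices per bag. That raises the coefficient above $4$ rather than costing a constant $+3$. Besides, your Step~1 already produces the $+3$ as $4(\mathsf{p}(G)+1)-1$.
\end{itemize}

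The missing idea is the actual structure behind Theorem~\ref{thm6}. Take an \emph{injective} edge-assignment $\tau$ with $\tau(f)$ on the boundary of $f$ and every $\tau(f)$ outside a spanning near-$3$-tree (Theorems~\ref{thm7}--\ref{thm10} together with Lemmas~\ref{lem2} and~\ref{lem3}). Put $f$ into the bags of \emph{all} its boundary vertices except the two endpoints of $\tau(f)$.

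By Proposition~\ref{prop2}, the load on $v$ is $|G^*_v|=d_{H_\tau}(v)$. Its total excess over $3$ is at most $0$, $2$, or $-4\chi+1$, and removing that many faces gives $S$. Each omitted endpoint $v$ of $\tau(f)$ is adjacent on the facial cycle to some $v'$ with $f\in G^*_{v'}$. Hence $H_v:=G^*_v\cup\{v\}$ defines a $G$-decomposition of $G^{\mathsf{fs}}$ in which the incidence $fv$ is handled through the edge $vv'$ of $G$.

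Composing with an optimal tree- or path-decomposition of $G$ (Lemma~\ref{lem1} and Remark~\ref{rem1}) needs no conversion step. Condition (D2') for the edge $vv'$ puts $v$ and $f$ in a common lifted bag. Lifted bags have at most $4(\mathsf{p}(G)+1)$ vertices after deleting $S$, and adding $S$ back to every bag yields the three stated bounds. Your final arithmetic is right; what is missing is the construction that achieves it.
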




\section{Proofs of Theorems~\ref{thm3} and \ref{thm4}}\label{sec2}

Let $G$ be a graph. 
For a vertex $v$ of $G$, we write the degree of $v$ in $G$ as $d_G(v)$. 
For a vertex set $X$ of $G$, $G[X]$ denotes the subgraph of $G$ induced by $X$. 
We start with the following lemma. 

\begin{lem}\label{lem1}
Let $F$, $G$, and $H$ be graphs. 
Let $(G,\mathcal{B}_1)$ and $(H,\mathcal{B}_2)$ be a $G$-decomposition of $F$ of width $G \textnormal{-} \mathsf{wd}(F)$ and an $H$-decomposition of $G$ of width $H \textnormal{-} \mathsf{wd}(G)$, respectively, where $\mathcal{B}_1 = (F_g)_{g\in V(G)}$ and $\mathcal{B}_2 = (G_h)_{h\in V(H)}$. 
For each $h\in V(H)$, we define $\tilde{F}_h :=\bigcup_{g\in G_h} F_g$. 
Then the following statements hold. 

\begin{itemize}
\item[(a)] The pair $(H,(\tilde{F}_h)_{h\in V(H)})$ is an $H$-decomposition of $F$.
\item[(b)] For each $h\in V(H)$, $|\tilde{F}_h|\leq G \textnormal{-} \mathsf{wd}(F) \cdot H \textnormal{-} \mathsf{wd}(G)$.
\end{itemize}
\end{lem}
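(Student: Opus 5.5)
Both parts follow by unwinding the definitions: we verify (D1), (D2), (D3) for the composed family, and then bound bag sizes by a union estimate.

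\textbf{(D1).} Take a vertex $f\in V(F)$. By (D1) for $(G,\mathcal{B}_1)$ there is a vertex $g$ of $G$ with $f\in F_g$. By (D1) for $(H,\mathcal{B}_2)$ there is a vertex $h$ of $H$ with $g\in G_h$. Then $f\in \tilde F_h$.

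\textbf{(D2).} Take an edge $ff'$ of $F$. By (D2) for $\mathcal{B}_1$, there are two cases.
\begin{itemize}
\item[(i)] Some $g$ satisfies $\{f,f'\}\subseteq F_g$. Choose $h$ with $g\in G_h$; then both $f,f'$ lie in $\tilde F_h$.
\item[(ii)] Some edge $gg'$ of $G$ satisfies $f\in F_g$ and $f'\in F_{g'}$. Apply (D2) for $\mathcal{B}_2$ to $gg'$.
\begin{itemize}
\item[$\bullet$] If $\{g,g'\}\subseteq G_h$ for some $h$, then $\{f,f'\}\subseteq\tilde F_h$.
\item[$\bullet$] If some edge $hh'$ of $H$ has $g\in G_h$ and $g'\in G_{h'}$, then $f\in\tilde F_h$ and $f'\in\tilde F_{h'}$.
\end{itemize}
\end{itemize}
In every case (D2) holds for the new family.

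\textbf{(D3).} This is the only step needing an argument, though it is still short. Fix $f$ and put $S_f=\{g : f\in F_g\}$; by (D3) for $\mathcal{B}_1$, $S_f$ induces a connected subgraph of $G$. The set of vertices of $H$ whose new bag contains $f$ is
\[
T_f=\{h : f\in\tilde F_h\}=\bigcup_{g\in S_f} T_g,\qquad T_g=\{h: g\in G_h\}.
\]
Each $T_g$ is nonempty and induces a connected subgraph of $H$, by (D1) and (D3) for $\mathcal{B}_2$.

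To show $H[T_f]$ is connected, it suffices to show that $T_g\cup T_{g'}$ induces a connected subgraph for every edge $gg'$ of $G[S_f]$; connectivity of $G[S_f]$ then propagates this along paths. Apply (D2) for $\mathcal{B}_2$ to $gg'$.
\begin{itemize}
\item[$\bullet$] Either some $h$ lies in $T_g\cap T_{g'}$, so the two connected pieces share a vertex.
\item[$\bullet$] Or some edge $hh'$ of $H$ has $h\in T_g$ and $h'\in T_{g'}$, so the two pieces are joined by an edge.
\end{itemize}
Either way $H[T_g\cup T_{g'}]$ is connected, which gives (D3).

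\textbf{(b).} We have $|\tilde F_h|\le\sum_{g\in G_h}|F_g|\le |G_h|\cdot G\textnormal{-}\mathsf{wd}(F)\le H\textnormal{-}\mathsf{wd}(G)\cdot G\textnormal{-}\mathsf{wd}(F)$. This uses that the width here is the maximum bag order, not order minus one, and that both decompositions were chosen to have optimal width.
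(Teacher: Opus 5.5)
Your proposal is correct and follows the paper's proof essentially step for step. It uses the same two-case split (with the sub-split) for (D2), the same reduction of (D3) to showing $H[T_g\cup T_{g'}]$ is connected for each edge $gg'$ of $G[S_f]$ via (D2) and (D3) of the second decomposition, and the same union bound for (b).
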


\begin{proof}
The statement (b) is immediate, since for each $h\in V(H)$, we have 
\[ |\tilde{F}_h| = \left| \bigcup_{g\in G_h} F_g \right|\leq \sum_{g\in G_h} |F_g| \leq |G_h|\cdot G \textnormal{-} \mathsf{wd}(F) \leq G \textnormal{-} \mathsf{wd}(F) \cdot H \textnormal{-} \mathsf{wd}(G). \]
So we confirm the statement (a). 
\begin{itemize}
\item[(D1)] For a vertex $f$ of $F$, take a vertex $g$ of $G$ with $f\in F_g$ and a vertex $h$ of $H$ with $g\in G_h$. 
Then $f\in \tilde{F}_h$.
\item[(D2)] Let $ff'$ be an edge of $F$. 
Then, at least one of the following two cases occurs.
\begin{itemize}
\item[Case 1] : Both $f$ and $f'$ are contained in some bag $F_g$.
\item[Case 2] : There is an edge $gg'$ of $G$ such that $f\in F_g$ and $f'\in F_{g'}$. 
\end{itemize}
In Case 1, we have $g\in G_h$ for some $h\in V(H)$ and thus $\{f,f'\}\subseteq \tilde{F}_h$. 
In Case 2, if $\{g,g'\}\subseteq G_h$ for some $h\in V(H)$, then $\{f,f'\}\subseteq \tilde{F}_h$. 
Otherwise there is an edge $hh'$ of $H$ such that $g\in G_h$ and $g'\in G_{h'}$, which implies that $f\in \tilde{F}_h$ and $f'\in \tilde{F}_{h'}$. 
\item[(D3)] Let $f$ be a vertex of $F$. 
Put $A:=\{g\in V(G) \mid f\in F_g\}$ and $B:=\{h\in V(H) \mid f\in \tilde{F}_h\}$. 
By definition, $B=\{h\in V(H) \mid g\in G_h \text{ for some }  g\in A\}$. 
We show that $B$ induces a connected subgraph of $H$. 
Note that
\smallskip\\
\noindent\hspace*{\fill} 
(*) : for each $g\in A$, $H[\{ h\in V(H) \mid g\in G_h \}]$ is connected.
\hspace*{\fill} 
\smallskip\\
Since $G[A]$ is connected, it suffices to show that $H[\{ h\in V(H) \mid g\in G_h \text{ or } g'\in G_h \}]$ is connected for every $gg'\in E(G[A])$. 
For any $gg'\in E(G[A])$, because there is $h\in V(H)$ such that $\{g,g'\}\subseteq G_h$, or $hh'\in E(H)$ such that $g\in G_h$ and $g'\in G_{h'}$, this assertion holds by (*). 
\end{itemize}
\end{proof}

\begin{remark}\label{rem1}
By a similar argument as in the proof of Lemma~\ref{lem1}, we can prove the following. 
In Lemma~\ref{lem1}, if $(H,\mathcal{B}_2)$ is a tree-decomposition (resp. path-decomposition) of $G$, then $(H,(\tilde{F}_h)_{h\in V(H)})$ is also a tree-decomposition (resp. path-decomposition) of $F$. 
Furthermore, the statement (b) holds if we replace $H \textnormal{-} \mathsf{wd}(G)$ by the width of  $(H,\mathcal{B}_2)$ plus 1 as a tree-decomposition (resp. a path-decomposition) of $G$. 
\end{remark}

Throughout the rest of this section, let $G$ be a polyhedral embedding on a closed surface with Euler characteristic $\chi$, unless otherwise stated. 
The following theorem plays a key role in the proof of Theorems~\ref{thm3} and~\ref{thm4}.

\begin{theorem}\label{thm6}
The following statements hold.
\begin{enumerate}
\item[(1)] If $\chi\in \{1,2\}$, then $G \textnormal{-} \mathsf{wd}(G^*)\leq 3$. 
\item[(2)] If $\chi=0$, then there exists $S\subseteq V(G^*)$ with $|S|\leq 2$ such that $G \textnormal{-} \mathsf{wd}(G^*-S)\leq 3$.
\item[(3)] If $\chi<0$, then there exists $S\subseteq V(G^*)$ with $|S|\leq -4\chi+1$ such that $G \textnormal{-} \mathsf{wd}(G^*-S)\leq 3$.
\end{enumerate}
\end{theorem}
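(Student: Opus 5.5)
The plan is to build an explicit $G$-decomposition of $G^*$ (or of $G^*-S$) in which every face $f$ of $G$ is placed in the bags of a small connected set $A_f\subseteq V(f)$ of vertices on its own facial cycle. Each $A_f$ is either a single vertex or an edge of $f$. Condition (D1) then holds trivially. Condition (D3) holds because a single vertex or an edge of the facial cycle induces a connected subgraph of $G$. The work is to choose the sets $A_f$ so that (D2) holds and every vertex $v$ of $G$ lies in at most three sets $A_f$. Here (D2) means: for every edge $uv$ of $G$, with $f,f'$ the two faces containing it (distinct, since $G$ is polyhedral and facial walks are cycles), the sets $A_f$ and $A_{f'}$ intersect or are joined by an edge of $G$.

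The key tool will be a tree--cotree decomposition, in the spirit of \cite{AHP2009}. Take a spanning tree $T$ of $G$, rooted at a vertex $r$. The duals of the edges of $E(G)\setminus E(T)$ form a connected spanning subgraph of $G^*$. By Euler's formula it has $|F(G)|-1+(2-\chi)$ edges, so it is a spanning tree $T^*$ of $G^*$ plus exactly $2-\chi$ extra edges $e_1,\dots,e_{2-\chi}$. Root $T^*$ at a face $f_0$. For each face $f\neq f_0$, let $e_f$ be the edge of $G$ dual to its parent edge in $T^*$. I would set $A_f$ to be an endpoint of $e_f$, or $e_f$ itself, chosen via the orientation of $T$: for instance, the endpoint of $e_f$ farther from $r$ in $T$, with the neighbouring tree edge used when needed. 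The aim is to make every vertex charged at most three times in total: once from its own parent edge in $T$, and at most twice from cotree edges. This is a discharging count along $T$ and $T^*$ that exploits $|V|+|F|=|E|+\chi$.

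For an edge $uv$, verifying (D2) splits into two cases.
\begin{itemize}
\item If $uv\in E(T)$, the two faces $f,f'$ on its sides are not in a parent--child relation through $uv$. Here I would use the polyhedral property that $f\cap f'$ is exactly the edge $uv$: then any vertices chosen on $f$ and $f'$ near $uv$ are equal or adjacent.
\item If $uv\notin E(T)$ and $uv$ is a $T^*$ edge, then $uv=e_f$ for the child face $f$. One must check that $A_{f'}$ is adjacent to or meets $A_f$; this is where the choice of $A_f$ via $T$ must be made carefully.
\end{itemize}
For $\chi\in\{1,2\}$, there are at most one extra edge (none when $\chi=2$). I expect to handle this by choosing the root face $f_0$ or the tree so that the extra edge is absorbed, giving statement (1).

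For $\chi\le 0$, the extra edges $e_1,\dots,e_{2-\chi}$ break the argument. I would put into $S$ one face incident with each extra edge. This gives $|S|\le 2$ for $\chi=0$, matching statement (2), and then one reruns the construction on $G^*-S$. For $\chi<0$, the larger bound $-4\chi+1$ suggests that deleting one face per extra edge is not enough. The additional faces presumably come from vertices whose load would exceed $3$ near the fundamental cycles of the $e_i$, and I would add those to $S$ and count them. The main obstacle, I expect, is the simultaneous verification of (D2) for edges of $T$ and the load bound of $3$ per vertex. Concretely, this means pinning down the right rule for $A_f$ from $e_f$ and $T$, where the polyhedrality of $G$ and of $G^*$ (Fact~\ref{fact1}) must be used to rule out two faces meeting in more than one edge or vertex.
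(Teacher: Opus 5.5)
\textbf{There is a genuine gap.} It lies in your basic design: if each $A_f$ is a single vertex or an edge of $f$, condition (D2) cannot hold in general, whatever rule you use to pick $A_f$ from $e_f$ and $T$. (D2) must hold for \emph{every} edge of $G^*$ at $f$, i.e.\ for each of the $|f|$ faces $f'$ sharing an edge with $f$. Each such $f'$ needs $A_{f'}\subseteq V(f')$ to meet $N_G[A_f]$, and a long face whose vertices have small degree makes this impossible.

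Concretely, let $G$ be the prism $C_k\times K_2$ with $k\ge 6$, a 3-connected plane graph. Take the top face $f=u_1\cdots u_k$ and the quadrilaterals $Q_i=u_iu_{i+1}w_{i+1}w_i$. If $A_f\subseteq\{u_j,u_{j+1}\}$, then
\[ N_G[A_f]\subseteq\{u_{j-1},u_j,u_{j+1},u_{j+2},w_j,w_{j+1}\}, \]
which is disjoint from $V(Q_{j+3})$. So no choice of $A_{Q_{j+3}}$ satisfies (D2) for the dual edge $fQ_{j+3}$.

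Your first bullet is where this is hidden. $A_f$ sits near the single parent edge $e_f$, but $f$ may have many tree edges $uv$ on its boundary. Nothing forces $A_f$ and $A_{f'}$ to be ``near $uv$''. So the obstruction is independent of any load count.

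\textbf{What the paper does instead.} It makes the sets large rather than small. The face $f$ goes into the bag of every vertex of $f$ except the two endpoints of $\tau(f)$, where $\tau$ is an injective edge-assignment (Proposition~\ref{prop1}). Then (D2) follows from the injectivity of $\tau$ on the unique shared edge, together with the fact that faces are cycles. The load is $|G^*_v|=d_{H_\tau}(v)$ with $H_\tau=G-\tau(V(G^*))$ (Proposition~\ref{prop2}). The paper chooses $\tau$ inside the cotree of a spanning tree $T$, via a connected unicyclic spanning subgraph of the cotree dual (Lemmas~\ref{lem2} and~\ref{lem3}). This gives $H_\tau=T$ plus $1-\chi$ edges, and $H_\tau=T$ when $\chi=2$.

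\textbf{The second missing ingredient.} The load at $v$ is at least $d_T(v)$. So a per-vertex bound of $3$ is exactly the existence of a spanning tree of maximum degree $3$ (Theorems~\ref{thm7}--\ref{thm9}). For $\chi<0$ it is Ota's tree with $\mathsf{te}(T,3)\le-2\chi-1$ (Theorem~\ref{thm10}). A discharging count based on Euler's formula only controls averages, and nothing in your plan supplies this per-vertex control.

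The set $S$ then absorbs $\mathsf{te}(H_\tau,3)\le\mathsf{te}(T,3)+2(1-\chi)$, which gives the bounds $2$ and $-4\chi+1$. Your ``one face per extra cotree edge'' matches $2$ for $\chi=0$ only by coincidence, and it gives no mechanism for the $\chi<0$ bound.
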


What does Theorem~\ref{thm6} imply? 
Theorem~\ref{thm6} states that, after removing a small number of ``apex vertices" from the dual of a polyhedral embedding, the resulting graph has a structure ``similar" to that of the original graph.
Since Fact~\ref{fact1} suggests that a polyhedral embedding and its dual have similar structures, Theorem~\ref{thm6} can be viewed as supporting  Fact~\ref{fact1} from the perspective of a width parameter. 
We first see that Theorem~\ref{thm6} implies Theorems~\ref{thm3} and~\ref{thm4}. 

\begin{proof}[Proof of Theorem~\ref{thm3}] 
For $\chi\in \{1,2\}$, it is easy to see that $H \textnormal{-} \mathsf{wd}(G^*) \leq 3\ H \textnormal{-} \mathsf{wd}(G)$ follows from (1) and Lemma~\ref{lem1}. 
So, we consider the case $\chi\leq 0$.
Let $S\subseteq V(G^*)$ be the vertex set in Theorem~\ref{thm6}. 
By Lemma~\ref{lem1}, $G^*-S$ admits an $H$-decomposition $(H,\mathcal{B})$ of width at most $3\ H \textnormal{-} \mathsf{wd}(G)$. 
Adding all vertices of $S$ to each bag of $(H,\mathcal{B})$, by (2) and (3), we obtain an $H$-decomposition of $G^*$ of width at most 
\begin{equation*} 
3\ H \textnormal{-} \mathsf{wd}(G)+|S| \leq 
\begin{cases}
3\ H \textnormal{-} \mathsf{wd}(G)+2 & \text{if}\ \chi= 0, \\
3\ H \textnormal{-} \mathsf{wd}(G)-4\chi + 1 & \text{if}\ \chi<0,
\end{cases}
\end{equation*}
as desired.
\end{proof}

\begin{proof}[Proof of Theorem~\ref{thm4}] 
Let $(P_n,\mathcal{X})$ be a path-decomposition of $G$ of width $\mathsf{pw}(G)$. 
Let $H$ be isomorphic to $P_n$. 
By definition, $H \textnormal{-} \mathsf{wd}(G) = \mathsf{pw}(G)+1$ and $\mathsf{pw}(G^*)+1\leq H \textnormal{-} \mathsf{wd}(G^*)$.
Recall Remark~\ref{rem1}. 
By repeating the same argument in the proof of Theorem~\ref{thm3} assuming Theorem~\ref{thm6}, we have the following inequalities. \begin{equation*} 
\mathsf{pw}(G^*)+1 \leq 
\begin{cases}
3\ (\mathsf{pw}(G)+1) & \text{if}\ \chi= 1, \\
3\ (\mathsf{pw}(G)+1)+2 & \text{if}\ \chi= 0, \\
3\ (\mathsf{pw}(G)+1)-4\chi+1 & \text{if}\ \chi<0.
\end{cases}
\end{equation*}
\end{proof}

We move on to a proof of Theorem~\ref{thm6}. 
Let $\tau$ be a map from $V(G^*)$ to $E(G)$. 
We say that $\tau$ is an \emph{edge-assignment} of $G$ if $\tau$ is an injection and $\tau(f)$ lies in the boundary of the face $f$ for every $f\in V(G^*)$. 
For each $v\in V(G)$, we let 
\[ G^*_v := \{ f\in V(G^*) \mid v \text{ is on the face $f$ but not an endpoint of } \tau(f) \}. \]
Then we have the following proposition.

\begin{prop}\label{prop1}
The pair $(G,(G^*_v)_{v\in V(G)})$ is a $G$-decomposition of $G^*$.
\end{prop}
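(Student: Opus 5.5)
We verify (D1), (D2), (D3) in turn, using only that $G$ is polyhedral (facial walks are cycles, so each face $f$ has at least three vertices, and two faces meet in $\emptyset$, a vertex, or an edge) and that $\tau$ is an edge-assignment.

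\textbf{(D1).} Let $f\in V(G^*)$. The boundary of $f$ is a cycle of length at least $3$, while $\tau(f)$ has only two endpoints. Hence some vertex $v$ on $f$ is not an endpoint of $\tau(f)$, and $f\in G^*_v$.

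\textbf{(D3).} Fix $f$. The set $\{v\mid f\in G^*_v\}$ is the vertex set of the facial cycle $C_f$ with the two endpoints of the edge $\tau(f)$ removed. Since $\tau(f)$ lies on $C_f$, deleting both its endpoints from the cycle leaves a path. That path is a subgraph of $G$, so its vertex set induces a connected subgraph of $G$.

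\textbf{(D2).} This is the main step. Let $ff'$ be an edge of $G^*$, dual to an edge $e=xy$ of $G$ on both faces. By polyhedrality, $C_f\cap C_{f'}$ is exactly the edge $e$ together with $x,y$. Write $f\setminus\tau(f)$ for the set of vertices of $C_f$ other than the endpoints of $\tau(f)$; this is a path $P_f$, and likewise $P_{f'}$. We need either a common vertex of $P_f$ and $P_{f'}$, or an edge of $G$ joining them. Since $\tau$ is injective, $\tau(f)$ and $\tau(f')$ cannot both equal $e$, so without loss of generality $\tau(f)\neq e$. We split into cases.

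\begin{itemize}
\item[Case A.] $\tau(f')\neq e$ as well. The edges $\tau(f)$ and $\tau(f')$ each cover at most one of $x,y$.
  \begin{itemize}
  \item If some endpoint of $e$, say $x$, is covered by neither, then $x\in P_f\cap P_{f'}$ and both faces lie in the bag $G^*_x$.
  \item Otherwise one of them covers $x$ and the other covers $y$. If $\tau(f)$ covers $x$ and $\tau(f')$ covers $y$, then $y\in P_f$ and $x\in P_{f'}$, and $xy$ is an edge of $G$, as (D2) requires. The case $\tau(f)\ni y$, $\tau(f')\ni x$ is symmetric.
  \item The remaining worry is that $\tau(f)$ or $\tau(f')$ covers both $x$ and $y$. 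That would force it to equal $e$, because the faces are cycles in a simple graph; this contradicts the case assumption.
  \end{itemize}
\item[Case B.] $\tau(f')=e$. Then $P_{f'}=C_{f'}-\{x,y\}$, which is nonempty. The edge $\tau(f)\neq e$ covers at most one of $x,y$, say not $x$, so $x\in P_f$. Let $z$ be the neighbour of $x$ on $C_{f'}$ other than $y$. Since $|C_{f'}|\ge3$ and $C_{f'}$ is a cycle, $z\neq y$ and $z\in P_{f'}$. The edge $xz\in E(G)$ then witnesses (D2) via $x\in G^*_x$ and $z\in G^*_z$.
\end{itemize}

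The expected obstacle is making sure the case analysis in (D2) is exhaustive. Case A covers $\tau(f)\neq e\neq\tau(f')$, and Case B covers exactly one of them equal to $e$; injectivity of $\tau$ excludes both equal to $e$. The polyhedral hypothesis is what makes this work: faces are cycles, so $P_f$ is a genuine path (giving connectivity in (D3)), and each face has at least $3$ vertices (giving nonemptiness in (D1) and the vertex $z$ in Case B).
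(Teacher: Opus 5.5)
Your proof is correct and follows essentially the same argument as the paper. (D1) and (D3) both come from the facial cycle minus the endpoints of $\tau(f)$ being a nonempty path. In (D2) your witnesses coincide with the paper's: either a shared endpoint of $e$ covered by neither $\tau(f)$ nor $\tau(f')$, or a vertex $v\in P_f$ that is an endpoint of $\tau(f')$, paired with its neighbour on $f'$ away from $\tau(f')$ (this neighbour is $x$ in your Case A and $z$ in your Case B). Your case split on whether $\tau(f')=e$ just makes the paper's single ``otherwise'' case more explicit.
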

 
\begin{proof}
Let $f$ be a vertex of $G^*$. 
Since the set $\{ v\in V(G) \mid f\in G^*_v \}$ consists of the vertices on $f$ which are not endpoints of $\tau(f)$, it induces a nonempty subpath, in particular, a connected subgraph of $G$. 
Thus, $(G,(G^*_v)_{v\in V(G)})$ satisfies the conditions (D1) and (D3). 

Now we confirm that it also satisfies the condition (D2). 
Let $ff'$ be an edge of $G^*$. 
Consider the (unique) edge $vv'$ such that the faces $f$ and $f'$ share in $G$. 
If at least one of $v$ and $v'$ is neither an endpoint of $\tau(f)$ nor of $\tau(f')$, then, by symmetry, we may assume that $v$ is such a vertex. 
In this case, we have $f, f'\in G^*_v$. 
Otherwise, since $\tau$ is injective, either $v$ or $v'$ is not an endpoint of $\tau(f)$ or $\tau(f')$. 
By symmetry, assume that $v$ is such a vertex; in particular, $v$ is not an endpoint of $\tau(f)$ but is an endpoint of $\tau(f')$. 
Then, we can find a neighbor $w$ of $v$ which lies on the face $f'$ but is not an endpoint of $\tau(f')$. 
Since $f\in G^*_{v}$ and $f'\in G^*_{w}$, the edge $vw$ is a desired one. 
\end{proof}

For an edge-assignment $\tau$ of $G$, let $H_{\tau} := G-\{ \tau(f) \mid f\in V(G^*) \}$.  

\begin{prop}\label{prop2}
For every $v\in V(G)$, $|G^*_v|=d_{H_{\tau}}(v)$.
\end{prop}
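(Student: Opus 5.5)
The plan is to set up a bijection, for a fixed vertex $v\in V(G)$, between the faces in $G^*_v$ and the edges of $H_\tau$ incident with $v$. Since $G$ is polyhedral, every facial walk is a cycle and the local rotation at $v$ is a single cyclic sequence. So around $v$ the incident edges $e_1,\dots,e_d$ (with $d=d_G(v)$) and the incident faces $f_1,\dots,f_d$ alternate, face $f_i$ lying between $e_i$ and $e_{i+1}$ (indices mod $d$). Each $f_i$ is a distinct face, since a face cycle passes through $v$ only once. Every edge $e_i$ lies on exactly two faces at $v$, namely $f_{i-1}$ and $f_i$. Moreover $v$ is an endpoint of $\tau(f_i)$ exactly when $\tau(f_i)\in\{e_i,e_{i+1}\}$, because these are the only edges of the facial cycle $f_i$ incident with $v$.

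Next I would count. By definition, $|G^*_v| = d - |\{i : \tau(f_i)\in\{e_i,e_{i+1}\}\}|$, since the faces on $v$ are exactly $f_1,\dots,f_d$. On the other side, $d_{H_\tau}(v) = d - |\{j : e_j\in \tau(V(G^*))\}|$. So it suffices to show that the number of faces $f_i$ at $v$ whose assigned edge is incident with $v$ equals the number of edges $e_j$ at $v$ that are assigned to some face.

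Define a map from the first set to the second by $f_i\mapsto \tau(f_i)$. It is well defined, since $\tau(f_i)$ is one of $e_i,e_{i+1}$, which are edges at $v$ in the image of $\tau$. It is injective because $\tau$ is injective. For surjectivity, take $e_j=\tau(f)$ for some face $f$. Since $\tau(f)$ lies on the boundary of $f$, the face $f$ contains the edge $e_j$, so $f\in\{f_{j-1},f_j\}$. In particular $f$ is a face at $v$ whose assigned edge is incident with $v$, and it maps to $e_j$. This gives the bijection and hence $|G^*_v|=d_{H_\tau}(v)$.

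The only delicate point is the local structure at $v$: the faces around $v$ must be distinct, and the edges of $f_i$ at $v$ must be exactly $e_i$ and $e_{i+1}$. Both follow from the facial walks being cycles together with the polyhedrality assumption (a face cannot revisit $v$, and two consecutive faces share only the single edge between them). Degenerate small degrees cannot occur, since in a polyhedral embedding $d\ge 3$. I expect this verification of the local rotation picture to be the main, though minor, obstacle. The counting itself is immediate.
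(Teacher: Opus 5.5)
Your proposal is correct and follows essentially the same route as the paper. Both count the $d_G(v)$ distinct faces at $v$, subtract those whose assigned edge is incident with $v$, and use injectivity of $\tau$ to match these with the edges at $v$ deleted in $H_\tau$; you additionally spell out the surjectivity step (a face whose assigned edge is incident with $v$ must itself lie on $v$), which the paper leaves implicit.
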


\begin{proof}
Let $v$ be a vertex of $G$. 
Since $G$ is a polyhedral embedding, $v$ is incident with $d_G(v)$ distinct faces.
Thus, the number of faces incident with $v$ equals $d_G(v)$. 
Let $X$ be the set of faces $f$ such that $v$ is an endpoint of $\tau(f)$. 
By definition, we have $|G^*_v|= d_G(v)-|X|$. 
Because $\tau$ is an injection, this value equals $d_{H_{\tau}}(v)$, as desired.
\end{proof}

In Propositions~\ref{prop1} and~\ref{prop2}, we show that a $G$-decomposition of $G^*$ can be obtained from an edge-assignment of $G$ and observe its property. 
Next, we present a construction of an edge-assignment of $G$. 

\begin{lem}\label{lem2}
Let $G$ be a polyhedral embedding on a closed surface with Euler characteristic $\chi\leq 1$. 
For any spanning tree $T$ of $G$, there exists an edge-assignment $\tau$ from $V(G^*)$ to $E(G)\setminus E(T)$. 
In particular, for the graph $H_{\tau}$ defined by such an edge-assignment $\tau$, the following statements hold.
\begin{itemize}
\item [(a)] $T\subseteq H_{\tau}$, and
\item [(b)] $|E(H_{\tau})| = |E(T)|-\chi+1$.
\end{itemize}
\end{lem}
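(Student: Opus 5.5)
We need an injection from faces to non-tree edges such that each face's assigned edge lies on its boundary. The natural tool is Hall's theorem, or a direct orientation argument on the dual. Let $C = E(G)\setminus E(T)$. The dual edges $C^* = \{e^* : e\in C\}$ form a spanning connected subgraph of $G^*$. It is spanning because it contains all vertices of $G^*$. It is connected because $T$ is acyclic, and cutting the surface along a tree does not disconnect it: any face set $X$ separated from its complement in $C^*$ would have its boundary edges between $X$ and the rest lying entirely in $T$, and these edges would form a cycle or nonempty cut-type edge set in $T$. More carefully, the set of edges of $G$ separating $X$ from $V(G^*)\setminus X$ is the boundary of the region $\bigcup X$, which is a nonempty union of cycles (an even-degree subgraph) and hence not contained in a forest. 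So $G^*[C^*]$ is connected.

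By Euler's formula $|V(G)|-|E(G)|+|V(G^*)|=\chi$ and $|E(T)|=|V(G)|-1$, we get
\[ |C^*| = |E(G)|-|V(G)|+1 = |V(G^*)|-\chi+1 \geq |V(G^*)| \]
because $\chi\le 1$. A connected graph with at least as many edges as vertices contains a cycle. So $C^*$ has a spanning connected unicyclic subgraph $U$, obtained from a spanning tree of $C^*$ plus one extra edge. Orient the cycle of $U$ cyclically and orient the remaining tree edges toward the cycle. Then each vertex of $G^*$ has exactly one out-edge. Define $\tau(f)$ to be the primal edge of $G$ corresponding to the out-edge at $f$.

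This $\tau$ is injective, since distinct faces have distinct out-edges. Each $\tau(f)$ lies on the boundary of $f$, since it is dual to an edge incident with $f$; the dual is simple because $G$ is polyhedral (Fact~\ref{fact1}), so no loops arise. Finally, $\tau(f)\notin E(T)$, so $\tau$ is an edge-assignment into $E(G)\setminus E(T)$.

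Statement (a) is immediate, since $H_\tau$ removes only edges outside $T$. For (b), $|E(H_\tau)| = |E(G)|-|V(G^*)|$, and by Euler's formula this equals $|V(G)|-\chi = |E(T)|-\chi+1$.

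The main obstacle is the connectivity of $C^*$ on a nonorientable or higher-genus surface. There the boundary of a set of faces should be argued in $\mathbb{Z}_2$-homology, as a nonzero cycle-space element; one must also check it is nonempty when $X$ is a proper nonempty subset, which holds since the surface is connected.
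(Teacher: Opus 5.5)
Your proposal is correct and follows essentially the paper's route. The duals of the non-tree edges form a connected spanning subgraph of $G^*$ (the paper's $T'$), and Euler's formula gives it at least $|V(G^*)|$ edges. A spanning unicyclic subgraph then yields the bijection from faces to incident edges, and (a) and (b) follow exactly as you wrote.

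Your even-degree cut argument fills in the connectivity step that the paper asserts only ``by construction.'' That argument is purely local, since the faces appear cyclically around each vertex, so your closing worry about $\mathbb{Z}_2$-homology on higher-genus or nonorientable surfaces is unnecessary.
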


\begin{proof}
Fix a spanning tree $T$ of $G$. 
We construct the graph $T'$ from $T$ as follows. 
The vertex set of $T'$ is $V(G^*)$, and two vertices $f$ and $f'$ of $T'$ are adjacent if and only if they share an edge of $E(G)\setminus E(T)$. 
By the construction of $T'$, $T'$ is connected. 
In addition, by Euler's formula, we have 
\[ |E(T')| = |E(G)| - |E(T)| = |E(G)| - |V(G)| + 1 = |V(G^*)| - \chi + 1\geq |V(T')|. \]
Hence, $T'$ contains a connected unicyclic spanning subgraph $T''$. 
Note that $V(T'')=V(G^*)$. 
Then, we can easily take a bijection $\sigma$ from $V(T'')$ to $E(T'')$ such that $\sigma(f)$ is incident with $f$ for each vertex $f$ of $T''$. 
By the construction of $\sigma$, $\sigma(f)$ lies in the face $f$ in $G$. 
Thus, we obtain an edge-assignment $\tau$ from $V(G^*)$ to $E(G)\setminus E(T)$ via $\sigma$, as desired.

Let $H_{\tau}$ be the graph defined by the above edge-assignment $\tau$. 
Then, (a) is trivial, since the image of $\tau$ is contained in $E(G)\setminus E(T)$. 
It is also easy to see that (b) holds by Euler's formula as follows.  
\[ |E(H_{\tau})| = |E(G)| - |V(G^*)| = |V(G)| - \chi = |E(T)| - \chi + 1. \]
\end{proof}

Let us consider the case $\chi=2$ (that is, $G$ is a 3-connected plane graph) in Lemma~\ref{lem2}. 
In this case, as a graph $T$, we take a graph obtained from a spanning tree of $G$ by deleting an edge. 
Then, the graph $T'$ constructed as in Lemma~\ref{lem2} is a connected unicyclic graph. 
By repeating the same argument, we can construct an edge-assignment $\tau$ from $T'$. 
Therefore, we obtain the following lemma (see also Lemma 2 in \cite{AHP2009}). 

\begin{lem}\label{lem3}
Let $G$ be a 3-connected plane graph. 
Let $T$ be the graph obtained from a spanning tree of $G$ by deleting an edge. 
Then, there exists an edge-assignment $\tau$ from $V(G^*)$ to $E(G)\setminus E(T)$ such that $H_{\tau} = T$.
\end{lem}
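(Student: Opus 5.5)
We follow the proof of Lemma~\ref{lem2}, with $T$ now a spanning forest of $G$ having exactly two components, and use Euler's formula with $\chi=2$. Fix a spanning tree $T_0$ of $G$ and an edge $e\in E(T_0)$, and put $T:=T_0-e$. As before, define $T'$ on $V(G^*)$ by joining $f$ and $f'$ whenever they share an edge of $E(G)\setminus E(T)$. In other words, $T'$ is the subgraph of $G^*$ formed by the duals of the edges not in $T$. The argument then has three steps.

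\textbf{Step 1: $T'$ is connected.} For a plane graph, the duals of the edges outside a spanning tree $T_0$ form a spanning tree of $G^*$; this is the standard tree--cotree duality, which needs $\chi=2$. Adding back the dual $e^*$ of the deleted edge keeps this subgraph spanning and connected. Hence $T'$ is connected.

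\textbf{Step 2: $T'$ is connected unicyclic.} By Euler's formula,
\[ |E(T')| = |E(G)|-|E(T)| = |E(G)|-|V(G)|+2 = |V(G^*)|. \]
A connected graph whose number of edges equals its number of vertices is unicyclic. So $T'$ is connected unicyclic and spanning on $V(G^*)$. One also checks directly that the unique cycle is the dual of the fundamental cut of $e$ in $T_0$.

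\textbf{Step 3: build the bijection.} We construct a bijection $\sigma\colon V(T')\to E(T')$ with $\sigma(f)$ incident with $f$:
\begin{itemize}
\item orient the unique cycle cyclically and send each cycle vertex to its outgoing cycle edge;
\item root each pendant tree at its cycle vertex and send every non-cycle vertex to the edge joining it to its parent.
\end{itemize}
Translating back, $\tau(f)$ is the primal edge whose dual is $\sigma(f)$. This edge lies on the boundary of $f$, is not in $T$, and $\tau$ is injective. Since $\sigma$ is a bijection onto $E(T')$, and $E(T')$ corresponds exactly to $E(G)\setminus E(T)$, the image of $\tau$ is all of $E(G)\setminus E(T)$. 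Therefore $H_\tau=G-(E(G)\setminus E(T))=T$.

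The only point that is more than a counting argument is the connectivity of $T'$ in Step 1. This is where planarity is genuinely used: on surfaces with $\chi\le 1$, Lemma~\ref{lem2} instead has $|E(T')|>|V(T')|$ and does not yield $H_\tau=T$. Simplicity of $G^*$, which holds by Fact~\ref{fact1}, ensures that edges of $T'$ correspond bijectively to edges of $E(G)\setminus E(T)$. Without it, distinct primal edges shared by the same pair of faces would give parallel edges, which we would simply keep as a multigraph.
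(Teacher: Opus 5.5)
Your proof is correct and follows the paper's route: you realize $T'$ as the dual subgraph on $E(G)\setminus E(T)$, show it is connected and, via Euler's formula with $\chi=2$, unicyclic, and then read off $\tau$ from a bijection sending each vertex of $T'$ to an incident edge, exactly as in Lemma~\ref{lem2}. One correction to your closing aside: connectivity of $T'$ holds on every surface (an embedded tree does not separate the surface, which is what Lemma~\ref{lem2} relies on), so planarity enters only through the Euler count; moreover, for $\chi=1$ that count gives $|E(T')|=|V(T')|$, not a strict inequality, and Lemma~\ref{lem2} yields $H_\tau=T$ there as well.
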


Let $k$ be a positive integer. 
For a graph $G$ and a spanning subgraph $F$ of $G$, we define the \emph{total excess} $\mathsf{te}(F,k)$ of $F$ from $k$ by
\[ \mathsf{te}(F,k) := \sum_{v\in V(G)} \max \{d_{F}(v)-k, 0\}.\]
If ($G$ is connected and) $F$ is a spanning tree of $G$ with maximum degree at most $k$, $F$ is called a \emph{$k$-tree}. 
By definition, if $F$ is a $k$-tree of $G$, then $\mathsf{te}(F,k)=0$. 

We need the following theorems to complete the proof of Theorem~\ref{thm6}. 

\begin{theorem} [\cite{B1966}]\label{thm7}
Every 3-connected planar graph has a 3-tree.
\end{theorem}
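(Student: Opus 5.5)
Theorem~\ref{thm7} is cited from \cite{B1966}, and the rest of the paper only uses it as a black box. If I had to prove it, I would not build the tree directly. I would go through a stronger, inductively provable object: a \emph{closed 2-walk}, meaning a closed walk that visits every vertex at least once and at most twice. The plan has two parts: (i) every 3-connected planar graph has a closed 2-walk (this is the Gao--Richter route); (ii) a closed 2-walk yields a 3-tree.

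Part (ii) is routine. Let $W=v_0v_1\dots v_m=v_0$ be a closed 2-walk and traverse it from $v_0$. Keep the edge $v_iv_{i+1}$ exactly when $v_{i+1}$ is visited for the first time. The kept edges form a spanning tree $T$ rooted at $v_0$, since every other vertex receives exactly one parent edge, pointing back to an earlier vertex. A vertex $v$ is the tail of a kept edge only when it is left during one of its visits. Since $v$ is visited at most twice, it has at most two child edges plus one parent edge, so $d_T(v)\le 3$. The root needs a small check: treat the closing return to $v_0$ as not a new visit, so $v_0$ has no parent edge and at most two children.

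Part (i) is the main obstacle. I would prove a strengthened statement for \emph{circuit graphs}: a 2-connected plane graph $G$ with outer cycle $C$ such that every 2-cut of $G$ has both vertices on $C$. A 3-connected plane graph is a circuit graph with respect to any facial cycle. The strengthened claim, in the style of Thomassen's Tutte-path argument, is roughly the following. For any edge $xy$ of $C$ (or any vertex $x$ of $C$), $G$ has a closed 2-walk that traverses $xy$, visits $x$ only once, and visits every vertex of the 2-cuts on $C$ a controlled number of times, so that walks on pieces can be glued.

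The induction would run as follows. If $G$ has a 2-cut $\{u,w\}\subseteq C$, split $G$ into two smaller circuit graphs by adding the chord $uw$. Apply induction to each piece with $uw$ as the prescribed edge. Then splice the two walks along $uw$, arranging via the strengthened hypothesis that $u$ and $w$ are each used at most twice overall. If there is no such 2-cut, delete a suitable vertex or contract a suitable edge on $C$ near $x$, or peel off the outer boundary as in Tutte's bridge argument. Then apply induction to the resulting circuit graph or graphs (one per bridge) and reinsert the removed part.

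The delicate point, where I expect most of the work, is choosing the inductive hypothesis so that the visit counts at the attachment vertices always stay at most $2$ after gluing. I would first try to mirror Thomassen's formulation for Tutte paths exactly, replacing "path through" with "2-walk through", and then adjust the boundary conditions case by case until the gluing step closes.
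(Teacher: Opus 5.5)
The paper gives no argument for Theorem~\ref{thm7}; it only cites Barnette \cite{B1966}. Your route is genuinely different. You obtain the statement as a corollary of two ingredients. The first is the existence of closed 2-walks in $3$-connected planar graphs, which is the theorem of Gao and Richter \cite{GR1994}, the same reference the paper uses for Theorem~\ref{thm8}. The second is the standard fact that a closed $k$-walk yields a spanning tree of maximum degree at most $k+1$. Your part (ii) proves that fact correctly. The first-entry edges give every non-root vertex exactly one parent discovered earlier, so they form a spanning tree. Each visit to $v$ contributes at most one child edge, so $d_T(v)\le 3$, and the root has degree at most $2$. What this buys is that Theorem~\ref{thm7} drops out of a strictly stronger property (a $3$-tree does not give a closed 2-walk, e.g.\ $K_{1,3}$), by the same mechanism that gives the projective-planar case. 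What it does not buy is any simplification, since all of the difficulty now sits in (i).

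Read as a self-contained proof, (i) is a plan rather than an argument, and the plan has two concrete defects.

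\emph{The class is too large.} Take $K_{2,5}$ with parts $\{u,w\}$ and $\{a,b,c_1,c_2,c_3\}$, drawn with outer cycle $uawb$. It is $2$-connected and its only $2$-cut $\{u,w\}$ lies on $C$, so it satisfies your definition of circuit graph. Yet it has no closed 2-walk: a closed walk alternates between the two sides, so it must visit $u$ and $w$ at least $5$ times in total. The right definition also requires every component of $G-\{u,w\}$ to meet $C$. Equivalently, joining a new vertex to all of $V(C)$ must yield a $3$-connected graph.

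\emph{The hypothesis does not survive your $2$-cut step.} Your tentative hypothesis is one prescribed boundary edge with one of its ends visited once. In your splice along the chord $uw$, the vertex $u$ is visited exactly $a_1+a_2-1$ times, where $a_i$ counts its visits in the two pieces, and similarly for $w$. So the piece containing the original prescribed edge $xy$ must also traverse $uw$. The visit-once conditions at $u$ and $w$ must also be distributed between the pieces. The inductive statement therefore has to carry at least two independent boundary prescriptions, and the $3$-connected reduction step must be shown to respect all of them. That is precisely the content of \cite{GR1994}, and your sketch leaves it open.

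So either cite \cite{GR1994} for (i) or supply that argument. If you cite it, your derivation is complete and on the same footing as the paper's citation of \cite{B1966}. Otherwise the main step of the proof is missing.
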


\begin{theorem} [\cite{GR1994}]\label{thm8}
Every 3-connected projective planar graph has a 3-tree.
\end{theorem}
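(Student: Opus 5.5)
Theorem~\ref{thm8} is quoted from \cite{GR1994}, so here I only sketch how I would try to reprove it. The plan is to reduce to a strengthened planar statement in the spirit of Barnette's proof of Theorem~\ref{thm7}.

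First I would isolate a Barnette-type lemma for \emph{circuit graphs}. A circuit graph is a 2-connected plane graph $P$ with outer cycle $C$ such that every 2-cut of $P$ has both vertices on $C$. The lemma I would aim for says that for any prescribed vertex $x$ on $C$ (or any prescribed edge $xy$ of $C$), $P$ has a spanning tree (or spanning forest of two components) with two properties. Every vertex has degree at most $3$. Every vertex of $C$ has degree at most $2$, except possibly a bounded number of specified vertices. I would prove it by induction on $|V(P)|$, following the usual scheme: either split $P$ along a chord of $C$ or along a 2-cut into two smaller circuit graphs, or delete a vertex of $C$ and observe that the remainder is again a circuit graph with a new outer cycle. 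Applied to a 3-connected plane graph with $C$ a face boundary, this already yields Theorem~\ref{thm7}.

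Next, let $G$ be 3-connected and embedded in the projective plane. If $G$ is planar we are done by Theorem~\ref{thm7}, so assume the embedding is non-planar. I would then choose a noncontractible cycle $N$ of $G$ that is as short as possible, or more generally one meeting the faces minimally, in the sense of face-width. Cutting the projective plane along $N$ gives a disk. This produces a plane graph $G_N$ whose outer cycle $C$ is a doubled copy of $N$: each vertex $v$ of $N$ appears as two copies $v', v''$ at antipodal positions on $C$. Using 3-connectivity of $G$ together with the minimality of $N$, I would verify that $G_N$ is a circuit graph.

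The hard part is re-identifying the copies. A spanning tree $T_N$ of $G_N$ does not directly give a spanning tree of $G$: identifying $v'$ with $v''$ adds degrees and may create cycles. I would handle this by applying the strengthened lemma with the boundary copies in mind. One natural scheme is that in $T_N$ the copies $v''$ for $v\in V(N)$ are leaves or nearly isolated, and the $v'$ have degree at most $2$, so that the identified tree has degree at most $3$. Concretely, I would delete one side $N''$ of the doubled boundary and take a spanning structure of $G_N-N''$, which should still be a circuit graph. I would then attach each $v$ through edges leaving from the $N''$-side only when its $N'$-copy has spare degree. Controlling these degrees along the whole of $N$ simultaneously, while keeping acyclicity after identification, is the main obstacle, and this is where I expect to need the full induction hypothesis with several prescribed low-degree boundary vertices.

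If this bookkeeping fails for long $N$, I would instead take $N$ of minimum length and use that small face-width gives a short noncontractible curve through few vertices. That would reduce the number of identified vertices to at most two or three, whose degrees can be handled by the prescribed exceptional vertices in the lemma.
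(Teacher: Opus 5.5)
There is a genuine gap, and it sits exactly where the projective case differs from the planar one. Cutting along a noncontractible cycle $N$ is fine; in fact $G_N$ is a circuit graph by the $3$-connectivity of $G$ alone, and no minimality of $N$ is needed. But the step you call ``the main obstacle'' is the whole content of the theorem, and you leave it open.

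Let $T_N$ be a spanning tree of $G_N$. Its image under $v'\sim v''$ has $|V(G)|+|N|-1$ edges counted with multiplicity, so exactly $|N|$ of them must be discarded. At the same time, each $v\in V(N)$ receives degree $d_{T_N}(v')+d_{T_N}(v'')$, which your circuit-graph lemma bounds only by $4$. These $|N|$ deletions would have to break every cycle and repair every overloaded vertex simultaneously, and nothing in the proposal controls that.

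The concrete scheme does not close the gap. $G_N-N''$ need not be a circuit graph, nor even connected: nothing in $3$-connectivity prevents an interior vertex of $G_N$ from having all its neighbours among the copies $v''$, and such a vertex is isolated in $G_N-N''$. Without circuit-graph structure a $3$-tree may not exist at all; $K_{2,6}$ is a $2$-connected plane graph with none.

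The fallback fails too. $3$-connected graphs in the projective plane can have arbitrarily large face-width (e.g.\ antipodal quotients of fine centrally symmetric triangulations of the sphere). Then every noncontractible cycle, in particular a shortest one, is long, so there is no reduction to two or three identified vertices.

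The missing idea is to carry through the cut an object that survives re-identification, namely a closed $2$-walk (a spanning closed walk visiting every vertex at most twice). The cited result is obtained via $2$-walks in circuit graphs, not via trees.

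A closed walk in $G_N$ that visits every vertex off $C$ at least once and at most twice, and visits each pair $\{v',v''\}$ ($v\in V(N)$) at least once and at most twice in total, projects to a closed $2$-walk of $G$. This needs no cycle or degree bookkeeping at all.

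The $3$-tree is extracted only at the end. Along a closed $2$-walk $w_0w_1\cdots w_{m-1}w_0$, keep for each $u\neq w_0$ the step by which $u$ is first reached. This is a spanning tree in which every vertex has at most one parent edge and at most one child per occurrence in the walk, hence degree at most $3$.

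What remains to prove is a circuit-graph lemma for $2$-walks with control over visits on the outer cycle. Vertices of $2$-cuts of a circuit graph can be forced to be visited twice, so that lemma, or the choice of $N$, must respect the pairing $v'\leftrightarrow v''$.
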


\begin{theorem} [\cite{BEGMR1995}]\label{thm9}
Every 3-connected graph embedded on the torus or Klein bottle has a 3-tree.
\end{theorem}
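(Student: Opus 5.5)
The plan is to reduce Theorem~\ref{thm9} to a strengthened form of the planar result behind Theorem~\ref{thm7}, by cutting the surface along a suitable noncontractible curve. The paper only cites this statement from \cite{BEGMR1995} and gives no proof, so what follows is my own plan. Throughout, $G$ is a 3-connected graph embedded on the torus or the Klein bottle.

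\textbf{Step 1: reduce to face-width at least 3.} First I would treat the case where the embedding is not planar-like, that is, its face-width is small. If the face-width is $1$ or $2$, a noncontractible curve meets $G$ in at most two vertices. Removing those vertices, or re-embedding after a cut, should leave a graph embeddable in the plane or the projective plane. There I would try to invoke Theorem~\ref{thm7} or Theorem~\ref{thm8} on a suitably 3-connected augmentation, and then reattach the removed vertices as leaves or degree-2 vertices. So assume from now on that the face-width is at least $3$.

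\textbf{Step 2: cut to the plane.} Choose a shortest noncontractible cycle $C$ of $G$; on the Klein bottle, choose one whose cutting yields a planar annulus. Cut the surface along $C$. The result is a plane graph $G'$ in which $C$ has two copies $C_1$ and $C_2$ bounding two distinguished faces. By the minimality of $C$ and the face-width assumption, $G'$ is 2-connected and internally 3-connected, i.e.\ a \emph{circuit graph} with respect to each boundary cycle.

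\textbf{Step 3: a strengthened planar lemma.} The key ingredient is a Barnette/Gao--Richter-type statement, proved by the standard induction on circuit graphs (splitting at 2-cuts and chords of the outer cycle). It would say that a plane circuit graph with distinguished boundary cycles has a spanning forest $F$ of maximum degree $3$ such that:
\begin{itemize}
\item every vertex of $C_2$ has $F$-degree at most $1$;
\item every vertex of $C_1$ has $F$-degree at most $2$;
\item each component of $F$ meets $C_2$ in exactly one vertex, or some similar connectivity control.
\end{itemize}

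\textbf{Step 4: glue back.} Identify $C_1$ with $C_2$ to recover $G$. Each identified vertex then has degree at most $2+1=3$. The component control from Step 3 should ensure that the union is connected and acyclic. Where a cycle does appear, I would delete an edge at a vertex of $C$, which does not increase any degree. The result is a 3-tree of $G$.

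\textbf{Main obstacle.} The hard part is Step 3, namely formulating the degree and connectivity conditions on both boundary cycles so that the induction closes. Gluing naively produces degree up to $4$ at identified vertices, or creates cycles. I would first try asking for a 3-walk on $G'$ in the spirit of Gao--Richter \cite{GR1994}, which visits each boundary vertex in a controlled way. I would then extract the tree from that walk, as is usually done when passing from 3-walks to 3-trees.
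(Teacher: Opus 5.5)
The paper only cites this theorem, so your plan has to stand on its own. It has a genuine gap: everything rests on Step~3, which is neither precisely formulated nor proved, and the version you write down would not give Step~4 even if it were proved.

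Under your bullets, $F$ spans $G'$ and has exactly one component $K_u$ through each $u_2\in V(C_2)$. Identifying $v_1$ with $v_2$ merges $K_v$ with the component containing $v_1$. So the glued graph is connected if and only if the functional graph of the map $\varphi\colon V(C)\to V(C)$, defined by $v_1\in K_{\varphi(v)}$, is connected. Nothing in your conditions forces this.

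For a concrete failure, take the toroidal grid $C_m\square C_n$ with $3\le n\le m$ and cut along a row, which is a shortest noncontractible cycle. The union of the columns satisfies all three bullets, since every boundary vertex has $F$-degree $1$. Yet $\varphi$ is the identity, and the glued graph is $n$ disjoint cycles.

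What is missing is therefore a \emph{global} condition on how the components of $F$ link $C_1$ to $C_2$. An induction that splits circuit graphs at $2$-cuts and chords never sees the identification $C_1=C_2$, so it cannot supply such a condition. Your repair in Step~4 also targets the wrong failure. Cycles are harmless, since any connected spanning subgraph of maximum degree $3$ contains a $3$-tree. Disconnection is the real problem, and it cannot be cured by deleting edges. The walk fallback has a separate problem: a $k$-walk only yields a $(k+1)$-tree. You would therefore need the glued walk to be a $2$-walk of $G$, not a $3$-walk.

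Steps~1 and~2 also need repair.
\begin{itemize}
\item In Step~2, $3$-connectivity of $G$ only gives that every component of $G'-\{a,b\}$, for a $2$-cut $\{a,b\}$, meets $C_1\cup C_2$. A lobe attached to an arc of $C_2$ at two of its vertices can avoid $C_1$ entirely. So $G'$ is in general not a circuit graph with respect to either boundary cycle separately, and the one-outer-cycle machinery does not apply as stated.
\item On the Klein bottle, the annulus-producing cycle need not be a shortest noncontractible one. The minimality you invoke must then be taken within that class.
\item In Step~1, $G-\{x,y\}$ is not $3$-connected. A $3$-connected augmentation may push the tree onto non-edges of $G$, and reattaching $x,y$ as leaves can raise a neighbour's degree to $4$.
\end{itemize}
For the use this paper makes of the theorem, Step~1 is unnecessary, because polyhedral embeddings are exactly the $3$-connected embeddings of face-width at least $3$.

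A cleaner way to get connectivity for free is not to glue at all. Find a connected \emph{spanning planar subgraph} of $G$ that is a circuit graph, for instance $G'-V(C_2)$ after suitable modifications near $C$. Then apply the Gao--Richter theorem that circuit graphs have $2$-walks, and convert the $2$-walk into a $3$-tree. This is the idea signalled by the title of the cited source, \emph{Spanning planar subgraphs of graphs in the torus and Klein bottle}. Making those modifications work is where the real difficulty lies.
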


\begin{theorem} [\cite{O2013}]\label{thm10}
Let $G$ be a 3-connected graph on a surface with Euler characteristic $\chi<0$.
Then $G$ has a spanning tree $T$ with $\mathsf{te}(T,3)\leq -2\chi-1$.
\end{theorem}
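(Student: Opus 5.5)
The plan is to go through spanning closed walks, following the route by which Theorem~\ref{thm7} and Theorem~\ref{thm8} are usually derived from Tutte-path technology, and to keep track of how much the surface topology costs. Call a closed walk $W$ in $G$ spanning if it passes through every vertex. Let $m_W(v)\ge 1$ be the number of times $W$ visits $v$ (counting the start only once).

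The conversion step is a standard reduction in the style of Jackson and Wormald. From $W$ we take the edges at which $W$ first reaches each vertex, which gives a spanning tree $T$ of the subgraph traversed by $W$. A vertex visited $m$ times is entered at most $m$ times and left at most $m$ times, and only the first entry is a tree edge. A careful choice of the root and of which occurrences contribute tree edges then gives
\[ d_T(v)\le m_W(v)+1 . \]
Consequently
\[ \mathsf{te}(T,3)\le\sum_{v}\max\{m_W(v)-2,0\}. \]
It therefore suffices to find a spanning closed walk $W$ with $\sum_v \max\{m_W(v)-2,0\}\le -2\chi-1$. I would state and check this conversion first as a separate lemma.

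To find $W$, I reduce to a planar-type piece by cutting the surface open. Write $\gamma=2-\chi$ for the Euler genus. I choose a system of $\gamma$ simple closed curves on the surface which together cut it into a disc. They should pass through faces and vertices of $G$ only, meeting $G$ only in vertices, as in the usual face-width arguments. Cutting $G$ along these curves gives a graph $G'$ embedded in a disc, in which each vertex met by a curve is split into copies lying on the outer boundary. On $G'$ I apply the Thomassen/Sanders form of Tutte's theorem: a $2$-connected plane graph has a Tutte path between two prescribed outer vertices through a prescribed outer edge. Using Gao and Richter's ``$2$-walk'' refinement, one gets a closed walk in $G'$ visiting every vertex at most twice, with the outer boundary vertices treated specially. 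After re-gluing, a vertex of $G$ is visited at most twice except at the glued points. The walk then has to be reconnected across each cut.

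The intended accounting is this. Each curve is chosen to meet $G$ in few vertices where the walk must be re-joined. Each of the $\gamma$ cuts should then produce at most two extra visits in total, beyond the budget of two per vertex. That would give $2\gamma-c$ extra visits for a small constant $c$, and $c$ must be tuned so that the total equals $-2\chi-1=2\gamma-5$. To make the count work I would choose the curves so that consecutive cuts share a base vertex: for example, all curves pass through one common vertex, as in a canonical polygon representation. The re-joining overlaps would then be absorbed at that vertex rather than paid separately. The sharp constant $-1$ is consistent with Theorem~\ref{thm9}, where $\chi=0$ gives excess $0$ while the formula $-2\chi-1$ would give $-1$. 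This suggests the proof saves one unit globally, perhaps by using a $3$-tree result for the last one or two handles instead of cutting them.

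I expect the main obstacle to be the cutting step itself. $G'$ need not be $2$-connected, the split vertices can cluster, and the curves cannot in general be chosen to meet $G$ in only one or two vertices, since face-width may be large. My first approach would be to avoid needing short curves. I would take a shortest non-contractible cycle $C$ in $G$ and cut along $C$ itself, rather than along a curve through faces. Then I would use a Tutte-path statement that prescribes an entire boundary path, which Thomassen's lemma allows, so that the walk runs along $C$ once on each side. Re-gluing the two copies of $C$ then creates extra visits only at the two endpoints where the walk switches sides, rather than along all of $C$. If this ``two extra visits per cut'' bound can be made rigorous, induction on $\gamma$ from the base cases in Theorems~\ref{thm8} and~\ref{thm9} gives the claimed total excess of at most $-2\chi-1$.
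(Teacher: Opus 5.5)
The paper does not prove this statement. It imports it from the cited work and uses it only as an input to Theorem~\ref{thm6}, so your proposal has to stand on its own.

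Your reduction step is sound. In the first-arrival tree of a closed walk, each non-root vertex gets one parent edge plus at most one child per departure, so $d_T(v)\le m_W(v)+1$ and $\mathsf{te}(T,3)\le\sum_v\max\{m_W(v)-2,0\}$.

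But the whole content of the theorem is the existence of a spanning closed walk with $\sum_v\max\{m_W(v)-2,0\}\le -2\chi-1$. For that you offer only a conditional plan (``should'', ``would'', ``if this can be made rigorous''), so there is a genuine gap at the core. Moreover, the plan does not reach the stated constant even if every step is granted. Take $\chi=-1$. A shortest non-contractible cycle $C$ is one of three kinds:
\begin{itemize}
\item one-sided, and cutting leaves Euler genus $2$;
\item two-sided nonseparating, and cutting leaves the projective plane;
\item separating, and cutting leaves a projective plane and a torus or Klein bottle.
\end{itemize}
In every case one cut lands in the base cases with excess $0$. ``Two extra visits per cut'' then gives $2$, whereas the theorem requires $1$. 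Iterating one-sided cuts gives $-2\chi$ in general. The missing unit is exactly what you defer to ``the proof saves one unit globally'', so your concluding sentence (induction from Theorems~\ref{thm8} and~\ref{thm9} gives $-2\chi-1$) is false as stated, not merely unproved.

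The cutting step also fails as formulated. After cutting along $C$ and capping the holes, the graph is in general not $3$-connected, so neither your induction hypothesis nor Theorems~\ref{thm8} and~\ref{thm9} apply to it. Even where they do apply, they say nothing about how the walk behaves on the two copies of $C$, and that is precisely what regluing needs. Theorems~\ref{thm8} and~\ref{thm9} are also $3$-tree statements, and trees cannot be reglued with degree control. You would at least need $2$-walk versions with prescribed behaviour on the boundary.

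The tool you invoke does not supply this. Thomassen's theorem and Sanders's extension prescribe the two ends and a single outer edge, not a whole boundary path. In addition, a Tutte path is in general not spanning. Its bridges must be covered recursively, as in the Gao--Richter $2$-walk construction, and this may revisit attachment vertices lying on $C$. Under your plan every vertex of $C$ already receives one visit from each side, so any such revisit is excess, and nothing bounds how many vertices of $C$ are affected. The claim that excess arises only at the two switching points is therefore unsupported.

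To turn this into a proof you would need three things:
\begin{itemize}
\item an explicitly stated inductive lemma for $2$-connected (circuit-graph-like) pieces, with controlled behaviour on the cut boundary;
\item a precise cost per unit of Euler genus;
\item an identified source of the extra saving that produces the $-1$.
\end{itemize}
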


\begin{proof} [Proof of Theorem~\ref{thm6}]
Let $G$ be a polyhedral embedding on a closed surface with Euler characteristic $\chi$. 
If $\chi=2$, then let $T$ be the graph obtained from a 3-tree of $G$ in Theorem~\ref{thm7} by deleting an edge. 
Otherwise, let $T$ be a spanning tree of $G$ in Theorems~\ref{thm8},~\ref{thm9}, or~\ref{thm10}. 
We take $H_{\tau}$ as in Lemmas~\ref{lem2} or~\ref{lem3} for $T$ and $\tau$. 
Then, by Lemmas~\ref{lem2} and~\ref{lem3}, we have 
\begin{align*}
\mathsf{te}(H_{\tau},3) &\leq \mathsf{te}(T,3) + 2\ (|E(H_{\tau})|-|E(T)|)\\
&= \max \{ -2\chi-1, 0 \} + 2\ \max \{ -\chi+1, 0 \}\\
&= \begin{cases}
0 & \text{if}\ \chi\in \{1,2\}, \\
2 & \text{if}\ \chi=0, \\
-4\chi + 1 & \text{if}\ \chi<0.
\end{cases}
\end{align*}
Thus, by Propositions~\ref{prop1} and~\ref{prop2}, $G^*$ admits a $G$-decomposition $(G,(G^*_v)_{v\in V(G)})$ with 
\begin{equation*} 
\sum_{v\in V(G)} \max \{|G^*_v|-3, 0\} = \mathsf{te}(H_{\tau},3) \leq \begin{cases}
0 & \text{if}\ \chi\in \{1,2\}, \\
2 & \text{if}\ \chi=0, \\
-4\chi + 1 & \text{if}\ \chi<0.
\end{cases}
\end{equation*}
For each $v\in V(G)$ with $|G^*_v|\geq 4$, choose arbitrarily $|G^*_v|-3$ vertices from $G^*_v$. 
Then the set consisting of these vertices is a desired $S$. 
\end{proof}

\section{Proof of Theorem~\ref{thm5}}\label{sec3}

In this section, we consider the face subdivision of a polyhedral embedding. 
The following theorem is key to the proof of Theorem~\ref{thm5}.

\begin{theorem}\label{thm11}
Let $G$ be a polyhedral embedding on a closed surface with Euler characteristic $\chi$, and let $H$ be the union of $G^{\mathsf{fs}}$ and $(G^*)^{\mathsf{fs}}$. 
Then the following statements hold.
\begin{enumerate}
\item[(1)] If $\chi\in \{1,2\}$, then $G \textnormal{-} \mathsf{wd}(H)\leq 4$. 
\item[(2)] If $\chi=0$, then there exists $S\subseteq V(H)$ with $|S|\leq 2$ such that $G \textnormal{-} \mathsf{wd}(H-S)\leq 4$.
\item[(3)] If $\chi<0$, then there exists $S\subseteq V(H)$ with $|S|\leq -4\chi+1$ such that $G \textnormal{-} \mathsf{wd}(H-S)\leq 4$.
\end{enumerate}
\end{theorem}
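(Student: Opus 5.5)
We follow the proof of Theorem~\ref{thm6} and extend the $G$-decomposition of Proposition~\ref{prop1} by adding the vertices of $G$ itself. First we pin down $H$. Its vertex set is $V(G)\cup V(G^*)$, since the face vertices of $G^{\mathsf{fs}}$ are identified with the vertices of $G^*$ and the face vertices of $(G^*)^{\mathsf{fs}}$ with the vertices of $G$. Its edge set is $E(G)\cup E(G^*)\cup\{vf \mid v \text{ lies on } f\}$. Now fix $T$ and the edge-assignment $\tau$ exactly as in the proof of Theorem~\ref{thm6}, and for $v\in V(G)$ put
\[ H_v := G^*_v\cup\{v\}. \]
We aim to show that $(G,(H_v)_{v\in V(G)})$ is a $G$-decomposition of $H$. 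Since $|H_v| = d_{H_\tau}(v)+1$ by Proposition~\ref{prop2}, we then have $\sum_v \max\{|H_v|-4,0\}=\mathsf{te}(H_\tau,3)$. This is bounded by $0$, $2$, or $-4\chi+1$ in the respective cases, exactly as computed before. Taking $S$ to consist of $|H_v|-4$ elements of $G^*_v$ for each $v$ with $|H_v|\ge 5$ then yields all three statements.

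Conditions (D1) and (D3) are straightforward. The vertex $v\in V(G)$ appears only in $H_v$, which is a single vertex and hence connected. A face $f$ appears in $H_v$ exactly when $f\in G^*_v$, and these $v$ form the nonempty subpath of Proposition~\ref{prop1}. For (D2) there are three kinds of edges to check:
\begin{itemize}
\item Edges of $G^*$ are covered by Proposition~\ref{prop1}, since $G^*_v\subseteq H_v$.
\item An edge $vv'$ of $G$ is covered by the $G$-edge $vv'$ itself, because $v\in H_v$ and $v'\in H_{v'}$.
\item An incidence edge $vf$, where $v$ lies on $f$, needs more care.
\end{itemize}

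The incidence edges $vf$ are the only point needing an argument. If $v$ is not an endpoint of $\tau(f)$, then $\{v,f\}\subseteq H_v$. Otherwise $v$ is an endpoint of $\tau(f)=vu$. Let $w$ be the other neighbour of $v$ on the facial cycle of $f$, so that $w\neq u$. We want $w\notin\{v,u\}$, i.e.\ $w$ not an endpoint of $\tau(f)$, which holds because $f$ is a cycle of length at least $3$. Then $f\in G^*_w\subseteq H_w$ while $v\in H_v$, and $vw\in E(G)$, so (D2) is satisfied through the edge $vw$. I expect this step, together with checking that no edge type of $H$ is missed, to be the only real obstacle; everything else carries over verbatim from Theorem~\ref{thm6}.
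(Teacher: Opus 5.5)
Your proposal is correct and is essentially the paper's own proof. You use the same bags $H_v = G^*_v\cup\{v\}$, the same three-way check of (D2), and the same neighbour-on-the-facial-cycle argument for the incidence edges $vf$. The only difference is that you recompute $S$ directly from $\mathsf{te}(H_\tau,3)$, whereas the paper reuses the bounds $|G^*_v\setminus S|\le 3$ already obtained in Section~2; the two are equivalent.
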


By the same argument as in Section~\ref{sec2}, we can see that Theorem~\ref{thm11} implies Theorem~\ref{thm5}. 
Note that $G \textnormal{-} \mathsf{width}$ is non-increasing under taking subgraphs. 
(More strongly, $G \textnormal{-} \mathsf{width}$ is \emph{minor-monotone}, as are treewidth and pathwidth.) 

\begin{proof} [Proof of Theorem~\ref{thm11}]
Let $G$ be a polyhedral embedding on a closed surface with Euler characteristic $\chi$. 
Let $H$ be the union of $G^{\mathsf{fs}}$ and $(G^*)^{\mathsf{fs}}$. 
In Section~\ref{sec2}, we constructed an edge-assignment of $G$ $\tau$ and a $G$-decomposition of $G^*$ $(G, (G^*_v)_{v\in V(G)})$, where $G^*_v := \{ f\in V(G^*) \mid v \text{ is on the face $f$ but not an endpoint of } \tau(f) \}$, which satisfies the following properties. 

\begin{enumerate}
\item[(1')] If $\chi\in \{1,2\}$, then $|G^*_v| \leq 3$ for every $v\in V(G)$. 
\item[(2')] If $\chi=0$, then there exists $S\subseteq V(G^*)$ with $|S|\leq 2$ such that $|G^*_v\setminus S| \leq 3$ for every $v\in V(G)$.
\item[(3')] If $\chi<0$, then there exists $S\subseteq V(G^*)$ with $|S|\leq -4\chi+1$ such that $|G^*_v\setminus S| \leq 3$ for every $v\in V(G)$.
\end{enumerate}

For each $v\in V(G)$, we let $H_v:=G^*_v\cup \{ v \}$. 
To prove Theorem~\ref{thm11}, by (1'), (2'), and (3') together with the definition of $H_v$, it suffices to show that $(G, (H_v)_{v\in V(G)})$ is a $G$-decomposition of $H$. 
Clearly, $(G, (H_v)_{v\in V(G)})$ satisfies the conditions (D1) and (D3). 
It remains to verify the condition (D2). 

Let $e$ be an edge of $H$. 
If $e\in E(G^*)$, since $(G,(G^*_v)_{v\in V(G)})$ is a $G$-decomposition of $G^*$, then (D2) holds.
Thus, we may assume that either (a) $e\in E(G)$ or (b) $e\in E(G^{\mathsf{fs}})\setminus E(G)$. 
Note that $E(G^{\mathsf{fs}})\setminus E(G) = E((G^*)^{\mathsf{fs}})\setminus E(G^*)$.
For (a), if $e=vv'\in E(G)$, then $v\in H_v$ and $v'\in H_{v'}$, and hence (D2) holds. 
For (b), let $e=fv$, where $f$ and $v$ are vertices corresponding to a face of $G$ and a vertex on $f$ in $G$, respectively. 
If $v$ is not an endpoint of $\tau(f)$, then $\{f,v\}\subseteq G^*_v\subseteq H_v$. 
Otherwise there is a unique vertex $v'$ adjacent to $v$ on $f$ which is not an endpoint of $\tau(f)$. 
Then $f\in H_{v'}$ and $v\in H_v$, and hence (D2) holds. 
\end{proof}

\section{Concluding remarks}
For the proof of Theorem~\ref{thm4} (and Theorem~\ref{thm6}), the fact that every polyhedral embedding $G$ has a ``near 3-tree" is crucial, as it determines the coefficient of $\mathsf{pw}(G)$ in  Theorem~\ref{thm4}. 
Hence, if $G$ has a Hamiltonian path, we can reduce the coefficient of $\mathsf{pw}(G)$ from 3 to 2. 
The following graph classes are known to have Hamiltonian paths: 4-connected plane graphs \cite{T1956}, 4-connected projective-plane graphs \cite{TY1994}, and 4-connected graphs embedded in the torus \cite{TYZ2005}. 
For these three graph classes, we can similarly reduce the coefficient of $\mathsf{tw}(G)$ or $\mathsf{pw}(G)$ from 4 to 3 in the bounds of Theorem~\ref{thm5}.

\section*{Acknowledgements} 
The author would like to thank Professor Katsuhiro Ota and Professor Kenta Ozeki for their helpful comments. 
The author was supported by JST ERATO Grant Number JPMJER2301, Japan.

\end{document}